\DeclareMathOperator{\Cpx}{Cpx}
\DeclareMathOperator{\Stc}{StCpx}
\DeclareMathOperator{\id}{id}
\title{Universal Families of Rayless Graphs}
\author[Aurichi]{Leandro Fiorini Aurichi}
\email{L. Aurichi: aurichi@icmc.usp.br}
\author[Pinto]{Guilherme Eduardo Pinto}
\email{G. E. Pinto: guipullus.gp@usp.br}
\address{ICMC-USP, São Carlos, SP, Brazil}
\date{}
\begin{document}

\begin{abstract}
We study the existence and cardinality of universal families for classes of rayless graphs.
 It is known, by a result of Diestel, Halin, and Vogler, that the class of countable rayless graphs does not admit a countable universal family, leaving open the precise complexity of this class.
 We prove that for every infinite cardinal $\kappa$, the class of rayless graphs of cardinality at most $\kappa$ admits a strongly universal family of size exactly $\kappa^+$, and that no smaller family can exist.
 This settles the problem for the countable case and extends uniformly to higher cardinalities.

We further investigate subclasses defined by forbidding subgraphs.
 When finitely many finite graphs are forbidden, the strong complexity remains $\kappa^+$, except in degenerate cases where it collapses to countable. 
In contrast, the class of countable rayless graphs when forbidding certain infinite graphs has a complexity that reaches its maximum possible value, the continuum.
 Finally, we establish that natural subclasses---including rayless trees, bipartite rayless graphs, graphs without even cycles, and graphs without infinite trails---retain the minimal strong complexity $\kappa^+$. 
These results provide a comprehensive characterization of universality in rayless graphs and highlight both its stability under restrictions and its sensitivity to specific obstructions.
\end{abstract}

\maketitle

\section{Introduction}

The study of universal structures has long been a fruitful theme in graph and model theory. 
Since Rado’s construction of a countable universal graph containing every countable graph as an induced subgraph in \cite{Rado}, universal objects and families have played a key role in understanding the complexity and structure of various graph classes.
 In many cases, however, the existence of a single universal graph is obstructed by structural limitations of the class under consideration.
A classical example is given by the rayless graphs, where it is well known that no countable universal graph exists. 
This raises a natural question: if no universal element exists, how large must a universal family be?

The notion of universality for families of graphs, together with the corresponding invariants of \emph{complexity} and \emph{strong complexity}, provides a systematic framework to address this question.
For a class of graphs $\mathcal C$, the complexity measures the minimal cardinality of a family within $\mathcal C$ such that every element of the class admits an immersion into some member of the family.
When asked for {\emph strong embeddings}, one obtains the notion of strong complexity.
Earlier work of Diestel, Halin, and Vogler \cite{Diest} established that the class of countable rayless graphs has no countable universal family, proving that its strong complexity is at least $\aleph_1$.
This leaves a wide range of possibilities between $\aleph_1$ and the continuum, the largest possible size of such a family.

The aim of this paper is to determine the exact strong complexity of rayless graphs across cardinalities.
We show that for every infinite cardinal $\kappa$, the class of rayless graphs of size up to $\kappa$ admits a strongly universal family of size exactly $\kappa^+$, and that no smaller family can exist.
This settles the problem for the countable case and generalizes it to arbitrary cardinalities.
Beyond the unrestricted class, we analyze rayless graphs with additional constraints, such as forbidden finite subgraphs and structural subclasses including trees, bipartite graphs, and graphs without even cycles or infinite trails.
We demonstrate that in all these cases, the strong complexity remains $\kappa^+$, except in certain degenerate cases where the complexity collapses to a smaller cardinal.
These cases encompass several cases of forbidden subgraphs studied in general classes, as those on \cite{Kom} and \cite{Che}

In particular, we identify a rank-1 countable rayless graph $K$ such that forbidding $K$ as a subgraph raises the complexity to at least the continuum, thereby showing that the spectrum of behaviors is genuinely broad.
On the other hand, for many natural classes of rayless graphs, the lower bound $\kappa^+$ is sharp.

This paper is organized as follows. 
In Section~\ref{sec:prel}, we introduce the definitions of universality, complexity, and the rayless hierarchy.
Section~\ref{sec:ray} establishes the exact value of the strong complexity for rayless graphs of arbitrary cardinality.
Section~\ref{sec:Fin} studies families obtained by forbidding finitely many finite subgraphs, while Section~\ref{sec:Infinite} considers infinite forbidden subgraphs, including an extremal example leading to maximal complexity.
Section~\ref{sec:Other} discusses other natural subclasses of rayless graphs, such as trees, bipartite graphs, and graphs without infinite trails, showing that the complexity remains minimal in these settings.

%    Grafos universais são importantes, Rado, sabemos que não há universal pelo Diestel.
    
%    trataremos universal como sempre sendo fiel.
    
%    há outra definição de universal com fora que chamaremos que externo, e nossos resultados permitem alcançar um nesse tipo também a cada passo, como veremos.

%    iremos ver a complexidade.

%    quando proibimos alguns casos.
     
\section{Universal Families and Hierarchy}\label{sec:prel}
    Due to varying definitions, the first step in studying universality and complexity of families is to define what it means to say a family is universal in a class.
    Two types of immersions will be considered, from each of which distinct notions are derived.

    \begin{defn}
        An immersion of a graph $G$ into the graph $H$ is a graph morphism $\phi:G\to H$ that is injective on the vertices.

        An immersion $\phi:G\to H$ is said to be strong if it is an isomorphism between $G$ and the induced subgraph $H[\textup{im}\phi]$.
    \end{defn}

    Universality means that every element of the class can be immersed in it, even with the notion of immersion possibly varying.
    As remarked earlier, there is no universal element in the class of countable rayless graphs, in such cases we may study possible universal families, or subclasses.

    \begin{defn}
        Given a class of graphs $\mathcal C$, we say that a family $\mathcal F\subseteq\mathcal C$ is (strongly) universal if for every element of the class $G\in\mathcal C$ there is an element of the family $H\in\mathcal F$ to which there is an (strong) immersion $\phi:G\to H$.
    \end{defn}

    The question now asked is if there is such a family, how small can it be?
    This way we associated to each class that admits a universal family a cardinal, called its (strong) complexity.

    \begin{defn}
        Given a class of graphs $\mathcal C$ that admits an (strongly) universal family, its complexity is the following cardinal.
        \[\Cpx(\mathcal C)=\min\{|\mathcal F|:\mathcal F\subseteq\mathcal C\mbox{ is a universal family}\}.\]

        Its strong complexity is the following cardinal.
        \[\Stc(\mathcal C)=\min\{|\mathcal F|:\mathcal F\subseteq\mathcal C\mbox{ is a strongly universal family}\}.\]
    \end{defn}

    It is important to remark that, if both are well defined, the strong complexity of a class is at least the complexity of the class.
    Diestel, Halin and Vogler, in \cite{Diest}, have not only proven that the class of countable rayless graphs lacks a universal element, but that its complexity is at least $\aleph_1$.
    In fact, one can see a similar result for the class of connected rayless graphs of size up to $\kappa$, that will be called a $\kappa$-rayless graph, for any infinite cardinal $\kappa$, as a result of the rayless hierarchy introduced by Schmidt in \cite{Schm} in the following way.

    \begin{defn}
        A graph $G$ has rank $0$ if, and only if, it is finite.

        For $\alpha>0$, a graph has rank $\alpha$ if, and only if, it does not have a smaller rank and there is a finite subgraph $F\subseteq G$, called a kernel, such that every connected component of $G-F$ has rank smaller than $\alpha$.
    \end{defn}

    \begin{teo}[\cite{Schm}]A graph is rayless if, and only if, it has a rank.\end{teo}

        \begin{defn}
        For every cardinal $\kappa$ and ordinal number $\alpha$, the class of $\kappa$-rayless graphs is denoted by $\mathbf A_\kappa$ and the class of $\kappa$-rayless graphs of rank strictly less than $\alpha$ is denoted by $\mathbf A_{\kappa}^\alpha$.
    \end{defn}

    If $H\subseteq G$ are rayless, then $H$'s rank is at most $G$'s rank.

    \begin{rem}
        Every $\kappa$-rayless graph has rank strictly smaller than $\kappa^+$.
    \end{rem}

%{    In fact, as will be seen below, for every $\alpha<\kappa^+$ there is a $\kappa$-rayless graph of rank $\alpha$.

%    \begin{exemp}\label{exp:card}
%        For every infinite cardinal $\kappa$ and non-zero ordinal number $\alpha<\kappa^+$, there is a tree of size $\kappa$ and rank $\alpha$.
%    \end{exemp}
%    \begin{proof}

%    We proceed by induction over $\alpha$.
%    For $\alpha=1$, the tree is the star with $\aleph_1$ many leaves, that will be denoted as $T_1$.
%    Let us remark that the minimal kernel set is singleton, and this will be true for every other $\alpha$.
%    Suppose that there is a tree $T_\alpha$ of size $\kappa$, rank $\alpha$ and minimal kernel set singleton.
%    Define $T_{\alpha+1}$ as $\kappa$ many independent copies of $T_\alpha$ and a new vertex whose neighbors are exactly all kernel vertices of the copies of $T_\alpha$, as represented in Figure \ref{fig:arv}.
%    
%        \begin{figure}[h]
%        \centering
%        \input{imagens/arv}
%        \caption{The tree for the successor case.}
%        \label{fig:arv}
%    \end{figure}
%    
%    Let $\gamma<\kappa^+$ be a limit ordinal number.
%    Suppose that for every $\alpha<\gamma$ it is already defined $T_\alpha$.
%    Define $T_\gamma$ similarly to $T_{\alpha+1}$, with the one difference being that instead of copies of a single one, it is attached one copy of $T_\alpha$ for each $\alpha<\gamma$.
    
%    \end{proof}
%}

	In fact, one can see that, for each infinite cardinal number $\kappa$, if $\alpha<\kappa^+$ is a non zero ordinal number, then there is a rayless tree of size $\kappa$ and rank exactly $\alpha$.

    As a result, for any infinite cardinal $\kappa$, a universal family $\mathcal F$ of the class of $\kappa$-rayless graphs has an element of $\mathcal F$ with rank at least $\alpha$, for every $\alpha<\kappa^+$. 
    Since $\kappa^+$ is a regular cardinal, $\mathcal F$ cannot have less than $\kappa^+$ elements, concluding the following.

    \begin{prop}\label{prop:piso}
        Let $\kappa$ be any infinite cardinal. The complexities of the class of $\kappa$-rayless graphs lies in the following range:
        \[\kappa^+\leq\Cpx(\mathbf A_\kappa)\leq\Stc(\mathbf A_\kappa)\leq2^\kappa.\]
    \end{prop}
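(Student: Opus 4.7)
The plan is to handle the three inequalities separately; only the leftmost one requires any work, and it is essentially the rank argument sketched in the paragraph just above the proposition, made precise.

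The middle inequality $\Cpx(\mathbf A_\kappa) \leq \Stc(\mathbf A_\kappa)$ is immediate: a strong immersion is in particular an immersion, so every strongly universal family is universal, and the infimum can only grow when we pass to the strong notion. This is exactly the observation recorded after the definition of complexity, and it will apply as soon as both cardinals are known to be well-defined.

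For the upper bound $\Stc(\mathbf A_\kappa) \leq 2^\kappa$, I would take as strongly universal family a transversal of the isomorphism classes inside $\mathbf A_\kappa$ itself; each graph strongly embeds into itself through the identity, so this transversal is a strongly universal subfamily. A graph of cardinality at most $\kappa$ is coded by a symmetric relation on a $\kappa$-set, so there are at most $2^{\kappa}$ such isomorphism classes, which both gives the upper bound and witnesses that $\Cpx(\mathbf A_\kappa)$ and $\Stc(\mathbf A_\kappa)$ are well-defined.

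The substantive step is the lower bound $\kappa^+ \leq \Cpx(\mathbf A_\kappa)$, and my plan is a pigeonhole on $\kappa^+$. Suppose, toward a contradiction, that $\mathcal F \subseteq \mathbf A_\kappa$ is universal with $|\mathcal F|\leq \kappa$. By the remark that every $\kappa$-rayless graph has rank below $\kappa^+$, each $H \in \mathcal F$ has some rank $\rho(H) < \kappa^+$, so $\{\rho(H):H\in\mathcal F\}$ is a set of at most $\kappa$ ordinals below $\kappa^+$; by regularity of $\kappa^+$ its supremum is some $\beta^{*} < \kappa^+$. Now invoke the fact stated right before the proposition: there exists a rayless tree $T$ of size $\kappa$ and rank exactly $\beta^{*}+1$. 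Universality gives an immersion $\phi:T \to H$ for some $H \in \mathcal F$; since $\phi$ is injective on vertices and edge-preserving, $\phi(T)$ is a subgraph of $H$ isomorphic to $T$, so by the monotonicity of rank under subgraphs we get $\rho(H) \geq \beta^{*}+1$, contradicting the choice of $\beta^{*}$.

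The only delicate point I anticipate is making sure the rank-monotonicity statement applies to $\phi(T)$, which need not be \emph{induced} in $H$; but the remark preceding the proposition is phrased for arbitrary subgraphs, so no additional argument is needed, and the whole proof reduces to the pigeonhole above.
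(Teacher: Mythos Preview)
Your proposal is correct and follows essentially the same approach as the paper. The paper does not give a separate proof environment for this proposition; the lower bound is exactly the cofinality argument sketched in the paragraph immediately preceding it (rank monotonicity plus regularity of $\kappa^+$), and the other two inequalities are treated as evident, which is what you also do.
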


\section{The Complexity of Rayless Families}\label{sec:ray}
    Proposition \ref{prop:piso} establishes the range for the complexity. 
    In this section, we prove that the strong complexity is the minimal possible:
    \begin{teo}\label{thm:rayl}
        Let $\kappa$ be an infinite cardinal. Then $\Stc(\mathbf A_\kappa)=\kappa^+$.
    \end{teo}

    The proof is carried out by an inductive construction on the rank, showing that every rank has small complexity.
    We begin with an auxiliary result:

    \begin{prop}\label{prop:rayl}
        For every cardinal $\kappa$ and every non-zero ordinal number $\alpha<\kappa^+$,
        \[\Stc(\mathbf A_\kappa^\alpha)=\begin{cases}
            \aleph_0,\ \text{if }\alpha\mbox{ is a successor ordinal;}\\
            \mbox{cf}(\alpha),\ \text{if }\alpha\mbox{ is a limit ordinal.}
        \end{cases}.\]
    \end{prop}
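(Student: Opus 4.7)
The plan is transfinite induction on $\alpha < \kappa^+$. For the base case $\alpha = 1$, the class $\mathbf A_\kappa^1$ consists of finite graphs; one takes, for each $n \in \omega$, a finite graph $U_n$ containing every $n$-vertex graph as an induced subgraph, yielding a countable strongly universal family, and observes that no single finite graph can absorb graphs of unboundedly large cardinality, giving the matching lower bound.

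For the successor step $\alpha = \beta + 1$ with $\beta \geq 1$, the upper bound $\Stc \leq \aleph_0$ is witnessed by constructing, for each finite graph $F$, a single $H_F \in \mathbf A_\kappa^\alpha$ with kernel $F$ into which every rank-$\leq\beta$ graph with kernel $F$ embeds strongly: attach to $F$ copies of graphs from a strongly universal family $\mathcal G$ for $\mathbf A_\kappa^\beta$ (of size at most $\kappa$ by the inductive hypothesis), with $\kappa$-fold multiplicity for each finite attachment pattern from a vertex to $F$. For the lower bound, define recursively the rank-$\beta$ graphs $G_n^1 = K_n \vee \overline{K_\kappa}$, $G_n^\beta = K_n \vee \bigsqcup_\kappa G_n^{\beta-1}$ for successor $\beta$, and $G_n^\beta = K_n \vee \bigsqcup_{\gamma < \beta} \bigsqcup_\kappa G_n^\gamma$ for limit $\beta$. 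Any strong embedding of $G_n^\beta$ into a host $H$ of rank $\leq \beta$ must map the outer $K_n$ into the finite kernel $F_H$; otherwise the $\kappa$ disjoint high-rank components attached to $K_n$ would have to embed into a single piece $F_H \cup C_i$ for some component $C_i$ of $H - F_H$, and a recursive argument peeling off kernels level by level forces each to meet a fixed finite iterated-kernel set of $H$, which is impossible $\kappa$ times. Since $|F_H|$ is finite for each individual host, no finite family suffices, giving $\Stc \geq \aleph_0$.

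In the limit case, the lower bound $\mbox{cf}(\alpha) \leq \Stc(\mathbf A_\kappa^\alpha)$ reproduces the cofinality argument used just before the proposition: every rank $\gamma < \alpha$ is realized in $\mathbf A_\kappa^\alpha$ (e.g.\ by a rayless tree), each member of a universal family has a single rank, and so $\mbox{cf}(\alpha)$ members are needed to cover all such ranks cofinally. For the upper bound, fix a cofinal sequence of successor ordinals $(\beta_\xi)_{\xi < \mbox{cf}(\alpha)}$ below $\alpha$ and take the union of the countable universal families $\mathcal F_{\beta_\xi}$ produced by the successor case; this union is strongly universal for $\mathbf A_\kappa^\alpha$ and has size $\mbox{cf}(\alpha) \cdot \aleph_0 = \mbox{cf}(\alpha)$.

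The principal obstacle will be the successor-step upper bound: building $H_F$ so that every rank-$\leq\beta$ graph with kernel $F$ embeds \emph{strongly} requires ruling out spurious edges between images of distinct target components. This is handled through a typed refinement of the inductive hypothesis — treating each vertex's adjacency pattern to $F$ as a finite extra parameter (only $2^{|F|}$ values, since $F$ is finite) — combined with $\kappa$-fold duplication of every typed universal component, so that disjoint target components embed into disjoint host components with the required induced structure.
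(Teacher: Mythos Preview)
Your overall scheme---transfinite induction with separate handling of the base, successor, and limit cases---matches the paper, and your lower-bound arguments (the cofinality argument at limits, and the graphs $G_n^\beta$ forcing unbounded kernel size at successors) are correct and actually more explicit than what the paper writes down; the paper merely remarks that the kernels of any universal family must be unbounded.

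The genuine divergence is in the successor-step upper bound. You propose to embed each component $C_\lambda$ of $G-F$ into a rank-$\beta$ universal graph while carrying along its adjacency pattern to $F$ as a $2^{|F|}$-valued vertex colouring; this forces you to strengthen the inductive hypothesis to one about \emph{coloured} graphs, and at each inner step the colour set grows by a further factor of $2^{|\text{kernel}|}$. That loading of the induction is sound, but the paper avoids it entirely with a simpler observation: the graph $\overline C_\lambda := C_\lambda \cup F$ is itself connected and of rank strictly below $\beta$, so it already lies in $\mathbf A_\kappa^\beta$ and embeds strongly into some $H_\lambda\in\mathcal F^\beta$. The attachment data is then captured automatically by where $F$ lands---namely by a strong immersion $f_\lambda:F_n\to H_\lambda$. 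Accordingly the paper builds $H_n^\alpha$ by attaching to $F_n$, for every $H\in\mathcal F^\beta$ and every strong immersion $f:F_n\to H$, $\kappa$ copies of $H-\operatorname{im} f$ with edges to $F_n$ as prescribed by $f$; there are at most $\kappa$ such pairs $(H,f)$, and the embedding of $G$ is assembled component by component. This keeps the induction on the proposition exactly as stated, with no auxiliary colouring. Your route works too; the paper's just buys a cleaner inductive hypothesis at the cost of a slightly less obvious indexing of the attached pieces.
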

    \begin{proof}
    	It is enough to see that there is a strongly universal family of the target cardinality, for each ordinal $\alpha$.
    	That is the case, since, for the limit ordinal case, the set of ranks of the elements of any universal family is cofinal in $\alpha$.
    	As for the successor ordinal case, the set of minimal separators of the elements of any universal family is infinite.
    	
        For $\alpha=1$, take the family of every connected finite graph $\mathcal F^{(1)}$ and take an enumeration $\mathcal{F}^{(1)}=\{F_n:n\in\mathbb N\}$.
        It is a countable strongly universal family of $\mathbf A_\kappa^{1}$.

        For $\alpha$ a limit ordinal, consider that for each $\beta<\alpha$ there is a strongly universal family $\mathcal F^{(\beta+1)}\subseteq\mathbf A_\kappa^{\beta+1}$ and take a cofinal sequence $\langle\beta_\gamma:\gamma<\mbox{cf}(\alpha)\rangle$ in $\alpha$.
        Define $\mathcal F^{\alpha}=\bigcup_{\gamma<\mbox{cf}(\alpha)}\mathcal F^{(\beta_\gamma+1)}$.

        For $\alpha=\beta+1$, consider a strongly universal family $\mathcal F^{\beta}$.
        For each $n\in\mathbb N$, we will define a $\kappa$-rayless graph of rank $\beta$ called $H_n^{\alpha}$, which will have minimal kernel set $F_n$.
        For each $n\in\mathbb N$ and each graph $H$, let $\mathcal I(F_n,H)$ be the set of all strong immersions $f:F_n\to H$, and consider $H_f$ the induced subgraph $H-\mbox{im}f$.

        The graph $H_n^{\alpha}$ will have $F_n$ as its minimal kernel set and the connected components are $\kappa$ many copies of $H_f$ for each $H\in\mathcal F^{\beta}$ and each $f\in\mathcal I(F_n,H)$.
        Also, the edges between $F_n$ and $H_f$ correspond to those between $\mbox{im}f$ and $H_f$ in $H$, as illustrated in Figure \ref{fig:const}

        \begin{figure}[ht]
        \centering
        \input{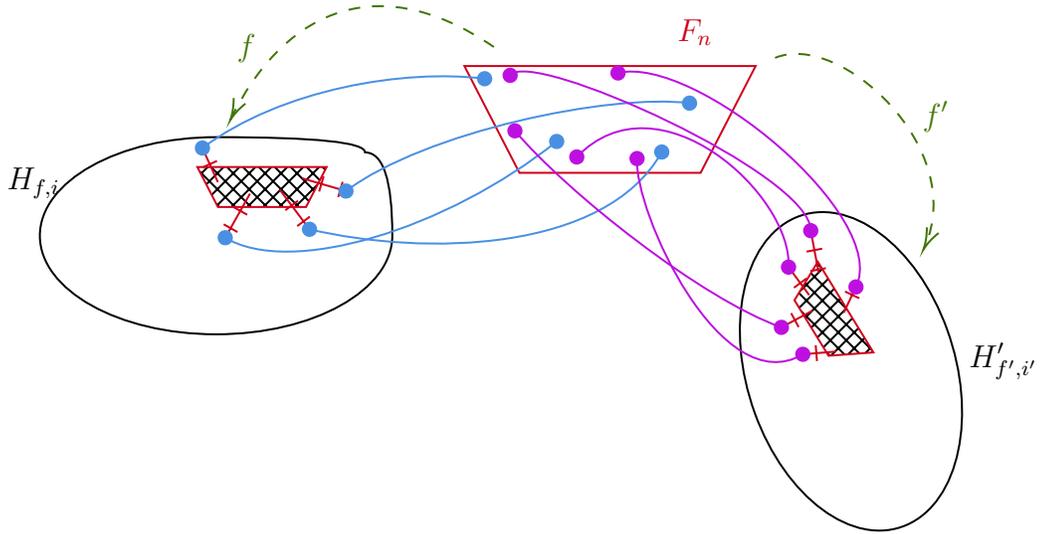}
        \caption{The construction}
        \label{fig:const}
        \end{figure}

        The formal construction is as follows.
        Let the vertex set be defined as
        \[V(H_n^{\alpha})=V(F_n)\cup\left(\bigcup\{V(H_f):H\in\mathcal F^{\beta},\ f\in\mathcal I(F_n,H)\}\times\kappa\right).\]

        There is an edge between $u,v\in V(H_n^{\alpha})$ if, and only if, one of the following holds:
        \begin{itemize}
            \item $u,v\in V(F_n)$ and $uv\in E(F_n)$;
            \item $u=(u^\prime,\lambda),v=(v^\prime,\lambda)\in V(H_f)\times\kappa$, for some $H\in\mathcal F^{\beta}$ and $f\in\mathcal I(F_n,H)$, and $u^\prime v^\prime\in E(H_f)$;
            \item $u\in V(F_n)$, $v=(v^\prime,\lambda)\in H_f\times\kappa$, for some $H\in\mathcal F^{\beta}$ and $f\in\mathcal I(F_n,H)$, and $f(u)v\in E(H)$.
        \end{itemize}

        It follows from the construction that $H_n^{\alpha}$ is a $\kappa$-rayless graph of rank $\beta$.
        Take $\mathcal F^{\alpha}=\{H_n^{\alpha}:n\in\mathbb N\}$, which is a strongly universal family.
        
        In order to verify this fact, let $G\in\mathbf A_\kappa^\alpha$.
        Consider a connected kernel $F\subseteq G$ and $n\in\mathbb N$ such that there is an isomorphism $\psi:F\to F_n$.
        Enumerate the family of all connected components of $G-F$ by $\{C_\lambda:\lambda<\mu\}$.
        For each $\lambda<\mu$, consider the induced subgraph $\overline C_\lambda=C_\lambda\cup F\subseteq G$.
        Consider also an element $H_\lambda\in\mathcal F^{\beta}$ with a strong immersion $\Psi_\lambda:\overline C_\lambda\to H_\lambda$.
        The restriction of $\Psi_\lambda$ to $F$ together with $\psi$ induces a strong immersion $f_\lambda\in\mathcal I(F_n,H_\lambda)$.

        Define the function $\Phi:V(G)\to V\left(H_n^{\alpha}\right)$ as
        \[\Phi(v)=\begin{cases}
            \psi(v)\in V(F_n);\text{ if }v\in V(F);\\
            (\Psi_\lambda(v),\lambda)\in V((H_\lambda)_{f_\lambda})\times \{\lambda\};\text{ if }v\in C_\lambda\text{, for }\lambda<\mu.
        \end{cases}.\]

        The function $\Phi$ defined is in fact a strong immersion from $G$ into $H_n^{\alpha}$.
        First, it is an injection by definition.
        Now, we will prove that, for each $u,v\in V(G)$, $uv\in E(G)$ if, and only if, $\Phi(u)\Phi(v)\in E(H_n^{\alpha})$.
        For that end, we will consider different cases for if $u$ and $v$ are in $F$ or $C_\alpha$.
        If both are at the same $\overline C_\lambda$, then the equivalency follows from the fact that $\Phi$ restricted to it is equivalent to $\Phi_\lambda$, which is a strong immersion.
        If $u$ and $v$ are not in the same $\overline C_\lambda$, then there is no edge between them.
        Moreover $\Phi(u)$ and $\Phi(v)$ are in different connected components of $H_n^{\alpha}$, thus, there is no edge between them.

        This concludes that the countable family constructed is a strongly universal family of $\mathbf A_\kappa^{\alpha}$.
    \end{proof}

    The proposition above gives the strong complexity of the classes of $\kappa$-rayless graphs with limited rank. It also allows us to construct a $\kappa$-rayless graph that contains every one of the graphs on the class as induced subgraphs, some times referred to as a not faithful universal graph. 

    \begin{cor}
        For every cardinal $\kappa$ and non-zero ordinal number $\alpha<\kappa$, there is a $\kappa$-rayless graph of rank $\alpha$ to which every $\kappa$-rayless graph of rank strictly less than $\alpha$ admits a strong immersion.
    \end{cor}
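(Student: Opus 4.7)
The plan is to apply Proposition~\ref{prop:rayl} to obtain a small strongly universal family for $\mathbf{A}_\kappa^\alpha$, and then amalgamate its members into a single graph by disjoint union.

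First, I would invoke Proposition~\ref{prop:rayl} to fix a strongly universal family $\mathcal{F}^\alpha \subseteq \mathbf{A}_\kappa^\alpha$ of cardinality $\aleph_0$ or $\mathrm{cf}(\alpha)$; in either case this is at most $\kappa$ because $\alpha < \kappa$. Then I would define
\[ U := \bigsqcup_{H \in \mathcal{F}^\alpha} H. \]
The routine verifications are that $|U|\leq\kappa$ (at most $\kappa$ summands, each of size at most $\kappa$), that $U$ is rayless (any ray would lie in a single component, hence inside some rayless $H$), and that $U$ contains every $G\in\mathbf{A}_\kappa^\alpha$ as an induced subgraph via a strong immersion: $G$ strongly immerses into some $H\in\mathcal{F}^\alpha$, and $H$ is an induced subgraph of $U$.

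The key step is showing $\mathrm{rank}(U) = \alpha$. For the upper bound, I would take $F = \emptyset$ as kernel: every component of $U$ is a component of some $H\in\mathcal{F}^\alpha$, hence has rank strictly less than $\alpha$, giving $\mathrm{rank}(U)\leq\alpha$. For the lower bound, I would exploit that the construction in Proposition~\ref{prop:rayl} guarantees that for every $\gamma<\alpha$ there is some $H\in\mathcal{F}^\alpha$ of rank at least $\gamma$: in the successor case $\alpha=\beta+1$ every member has rank $\beta$, while in the limit case a cofinal sequence $\beta_\gamma\nearrow\alpha$ of ranks is realized. Given any finite candidate kernel $F\subseteq U$, $F$ meets only finitely many summands, so infinitely many untouched $H$'s persist as connected components of $U-F$ retaining their original rank; these ranks are still unbounded below $\alpha$, ruling out any $\gamma<\alpha$ as a valid rank for $U$.

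The main obstacle, though mild, is ensuring that the untouched summands really do appear as connected components of $U-F$ with their full original rank; this reduces to the connectedness of each $H\in\mathcal{F}^\alpha$. This follows by induction on rank from the construction in Proposition~\ref{prop:rayl}: each $H_n^\alpha$ has a connected kernel $F_n$, and every copy of $H_f$ is attached to $F_n$ through the (necessarily nonempty) edges inherited from $\mathrm{im}\,f$ in $H$, so $H_n^\alpha$ is connected whenever the members of $\mathcal{F}^\beta$ used in its construction are.
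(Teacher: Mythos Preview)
Your argument has one genuine gap: in this paper a $\kappa$-rayless graph is by definition \emph{connected} (see the sentence introducing the term just before the definition of $\mathbf A_\kappa$), so your disjoint union $U=\bigsqcup_{H\in\mathcal F^\alpha}H$ is not a $\kappa$-rayless graph at all. You do argue carefully that each $H\in\mathcal F^\alpha$ is connected, but you use this only for the rank lower bound and never address the connectedness of $U$ itself.

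The repair is straightforward. One option is to add a single new vertex $v$ adjacent to one chosen vertex of each summand; the resulting graph $U'$ is connected, still has size at most $\kappa$, and still contains every $H$ (hence every $G\in\mathbf A_\kappa^\alpha$) as an induced subgraph, since $v\notin V(H)$. Taking $\{v\}$ as kernel gives $\mathrm{rank}(U')\leq\alpha$, and your lower-bound argument survives essentially unchanged: any finite $F\subseteq U'$ misses all but finitely many summands, and some component of $U'-F$ contains an untouched summand of rank arbitrarily close to $\alpha$. A cleaner route, and apparently what the paper has in mind, is simply to take the single graph $H_n^{\alpha+1}$ from the construction in Proposition~\ref{prop:rayl} with $F_n$ a one-vertex graph: the proof there already records that it is a $\kappa$-rayless graph of rank exactly $\alpha$, and for each $H\in\mathcal F^\alpha$ and each $f\in\mathcal I(F_n,H)$ the set $F_n\cup(H_f\times\{\lambda\})$ is an induced copy of $H$ inside $H_n^{\alpha+1}$.
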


    Theorem \ref{thm:rayl} follows directly from the Proposition \ref{prop:rayl}, when considering for each $\alpha<\kappa^+$ a countable strongly universal $\mathcal F_{\alpha+1}$ of $\mathbf A_\kappa^{\alpha+1}$ and taking $\mathcal F=\bigcup_{\alpha<\kappa^+}\mathcal F_{\alpha+1}$, which is a strongly universal family of $\mathbf A_\kappa$.
    
\section{Forbidding Finite Subgraphs}\label{sec:Fin}
    The focus of most studies of the existence of universal graphs --- or the complexity of the class --- is on classes of graphs (up to a certain cardinality) with forbidden subgraphs, that is, the class of all graphs into which no graph of a chosen family can be immersed (see \cite{Diest}, \cite{KomPa},\cite{KomPaMe} and \cite{She}).
    The class of  rayless graphs is such an example.
    
    After having seen the complexity of the class of rayless graphs, a natural step is to investigate further restrictions.
    The first step into this investigation is studying finite restrictions, in both senses of the word, that is, forbidding finitely many finite subgraphs.

    \begin{defn}
        Let $G$ and $K$ be graphs.
        We say that $G$ is $K$-free if there is no immersion from $K$ to $G$.
        For a family of graphs $\mathcal K$, $G$ is $\mathcal K$-free if it is $K$-free for each $K\in\mathcal K$.

        Given an infinite cardinal number $\kappa$, the class of $\kappa$-rayless graphs that are $\mathcal K$-free is called the class of $\kappa$-rayless graphs with forbidden $\mathcal K$ and is denoted by $\mathbf A_\kappa(\mathcal K)$.
        For any $\alpha<\kappa^+$, denote $\mathbf A_\kappa^\alpha(\mathcal K)=\mathbf A_\kappa(\mathcal K)\cap\mathbf A_\kappa^{\alpha}$.
    \end{defn}

    In fact, what will be verified is that, for most cases, the complexity remains unchanged, and on the cases it is different, it is because the complexity has decreased.
    We begin by noting that, unless specific subgraphs are forbidden, the complexity remains $\kappa^+$.

    \begin{exemp}\label{exp:Kfree}
        If $\mathcal K$ is a finite family of finite graphs, none of which is a subdivision of the star (path included), then for every $\alpha<\kappa^+$ there is a graph $G\in\mathbf A_\kappa(\mathcal K)$ of rank $\alpha$.
    \end{exemp}
    \begin{proof}
        For each tree $K\in\mathcal K$, there is more than one vertex of degree greater than 2, since it is not a subdivision of a star.
        Define $l_K$ the smallest distance between two vertices of degree 3 or more of $K$, and take $l=\max\{l_K:K\in\mathcal K\mbox{ is a tree}\}$.

        For each $\alpha<\kappa^+$, consider $T_\alpha$ a $\kappa$-rayless tree of rank $\alpha$.
        Let $T_\alpha^\prime$ be the resulting tree of dividing each edge of $T_\alpha$ into $(l+1)$-paths.
        %, as illustrated in the Figure \ref{fig:arv2}.

 %   \begin{figure}[ht]
 %       \centering
 %       \input{imagens/arv2}
 %       \caption{The tree $T_\alpha^\prime$.}
 %       \label{fig:arv2}
 %   \end{figure}

    That $T_\alpha^\prime$ has size $\kappa$ and rank $\alpha$ is directly from the construction.
    Since it is a tree, if $K\in\mathcal K$ is not a tree, then it cannot be immersed in $T_\alpha$.
    Consider $K\in\mathcal K$ is a tree.
    Since every two distinct vertices of degree greater than 2 have distance at least $l+1>l_K$, then $K$ cannot be immersed in $T_\alpha^\prime$.
    Thus concluding that $T_\alpha^\prime$ is $\mathcal K$-free.
    \end{proof}

    \begin{rem}\label{rem:Kfree}
        Note that if there is a subdivision of a star on $\mathcal K$, then there is a maximal finite rank for the class of $\kappa$-rayless graphs with forbidden $\mathcal K$.
    \end{rem}

	Therefore, except for when $\mathcal K$ has a subdivision of a star, the complexity of the class $\mathbf A_\kappa(\mathcal K)$ is at least $\kappa^+$.
	In fact, the same argument is true when considering finitely many countable graphs, with subdivisions of countably infinite stars, except that the maximum rank is possibly $\omega$ instead of necessarily finite, depending whether there is a limit to the size of the subdivisions.

	For the finite case, we can show that this lower bound is sharp.

    \begin{teo}\label{thm:fin}
        Let $\kappa$ be an infinite cardinal and $\mathcal K$ a finite family of finite graphs.
        Then $\Stc(\mathbf A_\kappa(\mathcal K))=\kappa^+$, if no element of $\mathcal K$ is a subdivision of the star; and $\Stc(\mathbf A_\kappa(\mathcal K))\leq\aleph_0$ otherwise.
    \end{teo}

    In order to prove Theorem \ref{thm:fin}, we first lay some ground work, through some definitions and a lemma, the rest of the proof will follow the footsteps of Theorem \ref{thm:rayl}, together with its own version of Proposition \ref{prop:rayl}.

    \begin{defn}
        Given two graphs $K$ and $G$, a subgraph $F\subseteq G$ captures $K$ if for every immersion $\phi:K\to G$ there is an immersion $\phi^\prime:K\to F$ such that
        \begin{equation*}
            \mbox{If }v\in V(K)\mbox{ is such that }\phi(v)\in V(F)\mbox{, then }\phi^\prime(v)=\phi(v).
        \end{equation*}

        In this case, $\phi^\prime$ is the representative of $\phi$ in $F$.

        Given a family of graphs $\mathcal K$, we say that $F\subseteq G$ captures the parts of $\mathcal K$ if for each $K\in\mathcal K$ it captures every subgraph $\widetilde K$ of $K$. 
    \end{defn}

    It is important to note that capturing a graph $K$ is a transitive property, that is, if there is a chain of subgraphs $F\subseteq H\subseteq G$ such that $H\subseteq G$ captures a graph $K$ and $F\subseteq H$ also captures $K$, then $F\subseteq G$ captures $K$.
    This definition allows us to consider that, in a rayless graph, for any finite family of finite graphs $\mathcal K$, there is a kernel set such that the graph is $\mathcal K$-free if, and only if, the kernel is $\mathcal K$-free.

    \begin{lem}\label{lem:capture}
        Let $G$ be a rayless graph and $\mathcal K$ a finite family of finite graphs.
        For any finite subgraph $F\subseteq G$, there is a finite subgraph $\widetilde F\subseteq G$ such that $F\subseteq\widetilde F$ and it captures the parts of $\mathcal K$ on $G$.
        In particular $\widetilde F$ can be a kernel.
    \end{lem}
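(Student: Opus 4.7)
The plan is to prove the lemma by transfinite induction on the rank of $G$. If $G$ has rank $0$ then $G$ is finite and I take $\widetilde F = G$, which captures vacuously and is trivially a kernel. For the inductive step, assume $G$ has rank $\alpha > 0$ and the lemma holds for every rayless graph of rank $<\alpha$. First I would extend $F$ to a finite kernel $F_0$ of $G$: if $F_G$ is any finite kernel of $G$, which exists by definition of rank, then $F_0 := F \cup F_G$ is again a finite kernel, since each component of $G - F_0$ is contained in some component of $G - F_G$ and hence has rank $<\alpha$.

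Let $\mathcal L$ denote the finite family of all subgraphs of the graphs in $\mathcal K$. For each $\widetilde K \in \mathcal L$ I classify the immersions $\phi : \widetilde K \to G$ by their \emph{pattern} relative to $F_0$: the partition $V(\widetilde K) = U_0 \sqcup U_1 \sqcup \cdots \sqcup U_m$ with $U_0 = \phi^{-1}(V(F_0))$ and $U_1, \ldots, U_m$ the fibres over the distinct components of $G - F_0$ actually used by $\phi$, together with the injection $f := \phi|_{U_0} : U_0 \to V(F_0)$ and the attachment information across each $U_0$-$U_i$ edge. Finiteness of $\widetilde K$ and $F_0$ forces only finitely many patterns to appear across all of $\mathcal L$. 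For each realised pattern $\pi$ I fix once and for all a witness immersion $\phi^\pi$ and its used components $C_1^\pi, \ldots, C^\pi_{m_\pi}$, and let $\mathcal C$ be the resulting finite collection of witness components. For each $C \in \mathcal C$, the rayless induced subgraph $H_C := G[V(F_0) \cup V(C)]$ has rank at most that of $C$, hence $<\alpha$: indeed, if $F'$ is any finite kernel of $C$ (taking $F' = \emptyset$ if $C$ is finite), then $F_0 \cup F'$ is a finite kernel of $H_C$ whose complementary components are exactly those of $C - F'$, which have rank $<\mathrm{rank}(C)$. The inductive hypothesis applied to $H_C$, with initial subgraph $F_0 \cup \bigcup_{\pi, i :\, C^\pi_i = C} \phi^\pi(U_i)$ and family $\mathcal L$, yields a finite $\widetilde F_C \supseteq F_0$ inside $H_C$ which is a kernel of $H_C$ and captures the parts of $\mathcal K$ within $H_C$. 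Set $\widetilde F := \bigcup_{C \in \mathcal C} \widetilde F_C$, which is finite and contains $F_0$.

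To verify the capture property, let $\phi : \widetilde K \to G$ be any immersion with pattern $\pi$ and used components $C'_1, \ldots, C'_m$. Define $\phi'$ piecewise: $\phi'|_{U_0} := f$, and for each $i$ consider two cases. In case (a), if $C'_i \in \mathcal C$ then $\phi|_{U_0 \cup U_i}$ is an immersion into $H_{C'_i}$, so the capture property of $\widetilde F_{C'_i}$ supplies an immersion of $\widetilde K[U_0 \cup U_i]$ into $\widetilde F_{C'_i}$ that agrees with $\phi$ on $\phi^{-1}(\widetilde F_{C'_i})$. In case (b), if $C'_i \notin \mathcal C$ then $V(C'_i) \cap \widetilde F = \emptyset$, no agreement constraint applies on $U_i$, and I may set $\phi'|_{U_i} := \phi^\pi|_{U_i}$, which lies in $\widetilde F_{C^\pi_i} \subseteq \widetilde F$. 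The pieces glue to a well-defined immersion because the pattern forbids edges of $\widetilde K$ between distinct $U_i, U_j$ with $i, j \geq 1$. The main obstacle is ensuring that in case (a) the different $U_i$-replacements do not collide inside $V(F_0) \setminus f(U_0)$; this is resolved by strengthening the inductive hypothesis to a version of capture that respects the partition of the target into $F_0$ and its complement (equivalently, by enriching the family captured inside each $H_C$ with labelled versions of the $\widetilde K[U_0\cup U_i]$ that force $U_i$-vertices of the replacement to land in $V(C)$). Finally $\widetilde F$ is a kernel of $G$, because $F_0 \subseteq \widetilde F$ forces every component of $G - \widetilde F$ to sit inside a component of $G - F_0$ and thus to have rank $<\alpha$.
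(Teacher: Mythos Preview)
Your overall strategy---induction on rank, reduce to finitely many witness components, apply the inductive hypothesis to each $H_C=G[V(F_0)\cup V(C)]$, and take the union---is natural but diverges from the paper at the key step, and the gap you yourself flag is real and is not closed by what you have written.

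The paper avoids all component-by-component bookkeeping with a pigeonhole device. With $k=\max_{K\in\mathcal K}|K|$ it builds $k{+}1$ pairwise disjoint unions $H_0,\dots,H_k$ of finitely many components of $G-F$: at stage $i$ one considers, for each possible restriction to $F$ of an immersion $\widetilde K\to G$ whose image avoids $H_0\cup\cdots\cup H_{i-1}$, a single witnessing immersion, and lets $H_i$ collect the components it touches. Since $|\widetilde K|\le k$, any given $\phi:\widetilde K\to G$ misses some layer $H_i$ entirely, and the part of $\phi$ landing outside $H=F\cup H_0\cup\cdots\cup H_k$ can be rerouted into $F\cup H_i$; disjointness of $H_i$ from $\operatorname{im}\phi$ yields injectivity of the resulting representative for free. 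One then applies the inductive hypothesis once, to the smaller-rank graph $H$, and transitivity of capture finishes.

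In your argument, by contrast, the representative $\phi'$ is assembled from independently produced pieces and you provide no mechanism forcing them to be disjoint. You acknowledge the danger of two case-(a) pieces colliding in $V(F_0)\setminus f(U_0)$ and propose to strengthen the inductive hypothesis to a rigid form of capture; that is a plausible direction (the paper later proves exactly such a rigid version for its bouquet section), but you have not actually carried it out, so the proof as written is incomplete. Moreover, even granting rigid capture so that each $\psi_j(U_j)$ stays inside $C'_j$, a further collision remains unaddressed: for a case-(b) index $i$ the piece $\phi^\pi|_{U_i}$ lands in the witness component $C^\pi_i\in\mathcal C$, and nothing prevents $C^\pi_i$ from coinciding with some $C'_j$ used in case (a), in which case $\psi_j(U_j)$ and $\phi^\pi(U_i)$ both lie in $\widetilde F_{C'_j}\cap C'_j$ and may overlap. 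Making your scheme work would require both proving the rigid-capture statement directly and arranging the witnesses and inductive calls so that such overlaps are excluded; the paper's layering trick sidesteps all of this in one stroke.
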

    \begin{proof}
        The proof of this result is  by induction on the rank of $G$.
        The case where $G$ is finite is trivial, due to the possibility of taking $\widetilde F=G$.
        Now, supposed that the statement is true for every graph of rank lower than the rank of $G$.
        Without loss of generality, suppose that $F\subseteq G$ is already a kernel set.
        Take $k$ the largest size of an element of $\mathcal K$.

        Consider $\mathcal I_0$ the family of restrictions to $F$ of immersions of $\phi:\widetilde K\to G$, for all elements $K\in\mathcal K$ and subgraphs $\widetilde K\subseteq K$. 
        For each $f\in\mathcal I_0$, take $\phi_f:\widetilde K\to G$ whose restriction is $f$.
        Define $H_0$ as the union of all (finitely many) connected components of $G-F$ that meet the image of any $\phi_f$.

        For each $i\leq k$, we will repeat a similar construction to achieve a pairwise disjoint family $H_0,..., H_k$ of unions of finitely many connected components of $G-F$.
        Given $i\leq k$, suppose that $H_0,...,H_{i-1}$ are already defined.
        Take $\overline H_i=H_0\cup...\cup H_{i-1}$.
        Define $\mathcal I_i$ similarly to $\mathcal I_0$, except that the image of $\phi$ is disjoint from $\overline H_i$ and define $H_i$ similarly to $H_0$, except that $\phi_f$ has its image disjoint from $\overline{H_i}$.

        The subgraph $H=F\cup H_0\cup...\cup H_k\subseteq G$ has strictly smaller rank, due to being the union of finitely many subgraphs of smaller rank.

        \begin{aff}
            $H\subseteq G$ captures the parts of $\mathcal K$.
        \end{aff}

        In order to prove this claim, take $\widetilde K\subseteq K\in\mathcal K$ and an immersion $\phi:\widetilde K\to G$.
        Since the size of $\widetilde K$ is at most $k$, there is an $i\leq k$ such that $H_i$ is disjoint from the image of $\phi$.
        Set $\overline \phi$ the restriction of $\phi$ to $\overline K=\widetilde K-\phi^{-1}[H-F]$.
        Thus, the restriction $f$ of $\overline\phi$ to $F$ was considered and there is $\phi_f:\overline K\to F\cup H_i$ that is representative of $\overline\phi$ in $F\cup H_i$.

        Take $\Phi:\widetilde K\to H$ as, for each $v\in V(\widetilde K)$, $\Phi(v)=\phi(v)$ if $\phi(v)\in H$, and $\Phi(v)=\phi_f(v)$ otherwise.
        It is a representative of $\phi$ on $H$, since by definition it preserves the points already in $H$ and it is an immersion, since $H_i$ is disjoint from the image of $\phi$, the restriction of $\phi$ to $\phi^{-1}[F]$ is $f$, the same as $\phi_f$ and both are immersions

        By induction hypothesis, there is $\widetilde F\subseteq H$ that contains $F$ and captures the parts of $\mathcal K$, therefore $\widetilde F\subseteq G$ captures the parts of $\mathcal K$.
    \end{proof}

    Now, in order to see Theorem \ref{thm:fin}, the universal families will be constructed as in Proposition \ref{prop:rayl}, with a modification in their formulation, from which will follow a property stronger than being universal.

    \begin{prop}\label{prop:Kfree}
        Let $\mathcal K$ be a finite family of finite graphs.
        For every cardinal $\kappa$ and non-zero ordinal number $\alpha<\kappa^+$ there is a family $\mathcal F_\kappa^{\alpha}(\mathcal K)\subseteq\mathbf A_\kappa^\alpha(\mathcal K)$ such that 
        \begin{enumerate}
        \item For every $G\in\mathbf A_\kappa^\alpha$ there is $H\in\mathcal F_\kappa^\alpha(\mathcal K)$ and a strong immersion $\Phi:G\to H$ whose image $\mbox{im}\Phi\subseteq H$ captures the parts of $\mathcal K$;\\
        \item The cardinality  is $|\mathcal F_\kappa^\alpha(\mathcal K)|=\begin{cases}
            \aleph_0,\ \mbox{if }\alpha\mbox{ is a successor ordinal;}\ \\
            \mbox{cf}(\alpha)\text{, if }\alpha\mbox{ is a limit ordinal.} 
        \end{cases}$.
        \end{enumerate}
    \end{prop}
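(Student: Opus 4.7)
My plan is to mimic the transfinite construction of Proposition~\ref{prop:rayl}, this time carrying along the additional bookkeeping required by property~(1). The induction is on $\alpha$, and the crucial new ingredient is Lemma~\ref{lem:capture}, which lets us enlarge any finite kernel of a $\mathcal K$-free rayless graph to one that captures the parts of $\mathcal K$.

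For the base case $\alpha=1$, take $\mathcal F_\kappa^{1}(\mathcal K)$ to be the countable set of isomorphism types of finite connected $\mathcal K$-free graphs; the identity embedding of $G$ into its own isomorphism type trivially captures. For a limit ordinal $\alpha$, set $\mathcal F_\kappa^{\alpha}(\mathcal K)=\bigcup_{\gamma<\mbox{cf}(\alpha)}\mathcal F_\kappa^{\beta_\gamma+1}(\mathcal K)$ for a cofinal sequence $(\beta_\gamma)$, exactly as in Proposition~\ref{prop:rayl}.

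The successor case $\alpha=\beta+1$ is the substantive one. Enumerate the finite connected $\mathcal K$-free graphs $\{F_n:n\in\mathbb{N}\}$ and define $H_n^{\alpha}$ by the same vertex and edge recipe as in Proposition~\ref{prop:rayl}, attaching $\kappa$ copies of $H_f$ for each $H\in\mathcal F_\kappa^{\beta}(\mathcal K)$ and each strong immersion $f:F_n\to H$ whose image $f(F_n)$ captures the parts of $\mathcal K$ in $H$; this extra selection on $f$ is what is designed to force the resulting graph to remain $\mathcal K$-free. To embed $G\in\mathbf A_\kappa^{\alpha}(\mathcal K)$, take a finite connected kernel $F\subseteq G$, enlarge it via Lemma~\ref{lem:capture} to a kernel $\widetilde F$ that captures the parts of $\mathcal K$ in $G$, fix $n$ with $\widetilde F\cong F_n$, and apply the induction hypothesis to each $\overline C_\lambda=C_\lambda\cup\widetilde F$ (which has rank strictly less than $\beta$) to obtain strong immersions $\Psi_\lambda:\overline C_\lambda\to H_\lambda$ with capturing image. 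By transitivity of capture together with the fact that capture passes to subgraphs, the restriction $f_\lambda=\Psi_\lambda|_{\widetilde F}$ has capturing image in $H_\lambda$, and so is a legitimate attachment pair; the map $\Phi$ assembled exactly as in Proposition~\ref{prop:rayl} is then a strong immersion into $H_n^{\alpha}$ whose image inherits the capturing property from the $\mbox{im}\,\Psi_\lambda$'s.

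The main obstacle is to verify that $H_n^{\alpha}$ itself belongs to $\mathbf A_\kappa^{\alpha}(\mathcal K)$, i.e.\ is $\mathcal K$-free. Since each $H$ used is already $\mathcal K$-free, the danger is that different attached copies combine around $F_n$ to produce a new copy of some $K\in\mathcal K$. The strategy is to prove the stronger statement that $F_n\subseteq H_n^{\alpha}$ captures the parts of $\mathcal K$, which, combined with $F_n$ being $\mathcal K$-free, excludes any such copy. The delicate point is that an immersion $\widetilde K\to H_n^{\alpha}$ may span several attached $H_f$-copies simultaneously, so the component-wise retractions supplied by the capture condition on each $f$ have to be stitched into a single injective map to $F_n$; this is where a multi-layer argument in the spirit of the proof of Lemma~\ref{lem:capture} is needed, exploiting the fact that $|V(\widetilde K)|$ is bounded by $\max\{|V(K)|:K\in\mathcal K\}$ and that each attachment is present in $\kappa$ disjoint copies, so that sufficiently many ``fresh'' copies are always available to accommodate the stitching.
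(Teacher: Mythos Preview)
Your overall plan matches the paper's proof almost exactly: the same inductive construction, the same restriction to strong immersions $f:F_n\to H$ whose image captures the parts of $\mathcal K$, and the same use of Lemma~\ref{lem:capture} to pick a capturing kernel of $G$. The architecture is correct.

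Where you go astray is in the last paragraph. To show $F_n\subseteq H_n^{\alpha}$ captures the parts of $\mathcal K$, the $\kappa$ many ``fresh'' copies are irrelevant: moving pieces of an immersion into unused copies pushes them \emph{away} from $F_n$, not into it. The actual mechanism is simpler and is already encoded in the definition of capture. Given $\phi:\widetilde K\to H_n^{\alpha}$ meeting finitely many attached components $C_1,\dots,C_l$, process them one at a time: restrict $\phi$ to the preimage of $F_n\cup C_1$; since $F_n\cup C_1\cong H$ via some $f$ with $\mathrm{im}\,f$ capturing, you get a representative in $F_n$ that \emph{fixes every vertex already landing in $F_n$}. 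Splice this with the unchanged values of $\phi$ on the preimage of $C_2\cup\dots\cup C_l$; injectivity is automatic because the new values lie in $F_n$ while the old ones lie in the pairwise disjoint $C_i$'s. Iterating $l$ times yields a representative entirely in $F_n$. No bound on $|V(\widetilde K)|$ and no multiplicity of copies enter. The clause ``$\phi'(v)=\phi(v)$ whenever $\phi(v)\in V(F)$'' in the definition of capture is precisely what rules out the collisions you were worried about.

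A second point you skate over: that $\mathrm{im}\,\Phi\subseteq H_n^{\alpha}$ captures the parts of $\mathcal K$ does not follow just by saying it ``inherits'' the property from the $\mathrm{im}\,\Psi_\lambda$. This is the content of the Claim in the paper's proof and needs its own argument: given $\phi:\widetilde K\to H_n^{\alpha}$, first use the fact that $F_n$ captures (just established) to retract into $F_n$ the piece of $\phi$ landing in components \emph{other} than the $H_{f_\lambda}\times\{\lambda\}$ used by $\Phi$, and then iteratively push each remaining $\lambda_i$-piece into $\Psi[\overline C_{\lambda_i}]$ using the capturing property of $\mathrm{im}\,\Psi_{\lambda_i}\subseteq H_{\lambda_i}$. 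Both steps are instances of the same one-component-at-a-time retraction; neither uses fresh copies.
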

    \begin{proof}
        For $\alpha=1$, take the family of every connected finite $\mathcal K$-free graph $\mathcal F_\kappa^1(\mathcal K)$ and an enumeration $\{F_n:n\in\mathbb N\}$.
        This family satisfies the desired properties.
        For a limit ordinal $\alpha$, it is enough to consider $\mathcal F_\kappa^\alpha(\mathcal K)=\bigcup_{\gamma<\mbox{cf}(\alpha)}\mathcal F_\kappa^{\beta_\gamma+1}(\mathcal K)$, for $\langle\beta_\gamma:\gamma<\mbox{cf}(\alpha)\rangle$ a cofinal sequence on $\alpha$.

        For $\alpha=\beta+1$. For each $n\in\mathbb N$, we will define a $\kappa$-rayless graph $\mathcal K$-free of rank $\beta$ called $H_n^\alpha$ with minimal kernel $F_n\subseteq H_n^\alpha$ that captures the parts of $\mathcal K$.

        Consider, for each $H\in\mathcal F_\kappa^\beta(\mathcal K)$, $\mathcal{I}(F_n, H;\mathcal K)$ the family of immersion $f:F_n\to H$ such that $\mbox{im}f\subseteq H$ captures the parts of $\mathcal K$.
        Define now the graph $H_n^\alpha$ analogously to how it is defined in Proposition \ref{prop:rayl}, except that $H_f$ is only considered if $f\in\mathcal I(F_n,H;\mathcal K)$.
        Let us remark that $H_n^\alpha\in\mathbf A_\kappa^\alpha(\mathcal K)$, since by construction it is a $\kappa$-rayless graph of rank up to $\beta$ and $F_n$, which is $\mathcal K$-free, captures the parts of $\mathcal K$ on $H_n^\alpha$. 
        Take $\mathcal F_\kappa^\alpha=\{H_n\alpha:n\in\mathbb N\}$ the family wanted.

        In order to verify the statement (1) of the Proposition, consider $G\in\mathbf A_\kappa^\alpha(\mathcal K)$.
        Consider a connected kernel $F\subseteq G$ that captures the parts of $\mathcal K$, and $n\in\mathbb N$ such that there is an isomorphism $\psi:F\to F_n$.
        Enumerate the family of all connected components of $G-F$ by $\{C_\lambda:\lambda<\mu\}$.
        For each $\lambda<\mu$, consider the induced subgraph $\overline C_\lambda=C_\lambda\cup F\subseteq G$.
        Consider also an element $H_\lambda\in\mathcal F_\kappa^\alpha(\mathcal K)$ with a strong immersion $\Psi_\lambda:\overline C_\lambda\to H_\lambda$ such that $\mbox{im}\Psi_\lambda\subseteq H_\lambda$ captures the parts of $\mathcal K$.

        Note that the immersion $f_\lambda=\Psi_\lambda\circ\psi^{-1}:F_n\to H_\lambda$ is in $\mathcal I(F_n,H_\lambda;\mathcal K)$, since $\mbox{im}f_\lambda\subseteq\mbox{im}\Psi_\lambda$ captures the parts of $\mathcal K$ and $\mbox{im}\Psi_\lambda\subseteq H_\lambda$ captures the parts of $\mathcal K$.
        Thus, we may define, in an analogous way to the one in Proposition \ref{prop:rayl}, a strong immersion $\Psi:G\to H_n^\alpha$.

        \begin{aff}\label{aff:capp}
        The subgraph $\mbox{im}\Psi\subseteq H_n^\alpha$ captures the parts of $\mathcal K$.
        \end{aff}
        Consider an immersion $\phi:\widetilde K\to H_n^{\alpha}$ with $\widetilde K\subseteq K$ of an element $K\in\mathcal K$.
        List $\lambda_1,...,\lambda_l$ all $\lambda<\mu$ such that $\mbox{im}\phi$ meets $H_{f_\lambda}\times\{\lambda\}$.
        Let $\overline \phi$ be the restriction of $\phi$ to $\overline K=\widetilde K-\phi^{-1}\left[\left(H_{f_{\lambda_1}}\times\{\lambda_1\}\right)\cup...\cup \left(H_{f_{\lambda_l}}\times\{\lambda_l\}\right)\right]$, and take its representative to $F_n$ denoted by $\overline{\phi^\prime}$.
        Define $\phi_0:\widetilde K\to F_n\cup \left(H_{f_{\lambda_1}}\times\{\lambda_1\}\right)\cup...\cup \left(H_{f_{\lambda_l}}\times\{\lambda_l\}\right)$ a representative of $\phi$, by taking $\overline{\phi^\prime}(v)$ if $v\in V(\overline K)$ and $\phi(v)$ otherwise.

        Consider $\overline \phi_0$ the restriction of $\phi_0$ to $\phi_0^{-1}\left[F_n\cup\left(H_{f_{\lambda_1}}\times\{\lambda_1\}\right)\right]\subseteq\widetilde K$ and take the representative of it to $\Psi[\overline C_{\lambda_1}]\subseteq F_n\cup \left(H_{f_{\lambda_1}}\times\{\lambda_1\}\right)\cup...\cup \left(H_{f_{\lambda_l}}\times\{\lambda_l\}\right)$.
        We may define $\phi_1$ a representative of $\phi_0$ to 
        \[F_n\cup \Psi[C_{\lambda_1}]\cup\left(H_{f_{\lambda_2}}\times\{\lambda_2\}\right)\cup...\cup \left(H_{f_{\lambda_l}}\times\{\lambda_l\}\right).\]

        Repeating this argument inductively, we may get $\phi_l$, a representative of $\phi$ to
        \[F_n\cup\Psi[C_{\lambda_1}]\cup...\cup\Psi[C_{\lambda_l}].\]
        In fact, from the construction, $\phi_l:\widetilde K\to\mbox{im}\Psi$ is a representative of $\phi$.
        This concludes the proof the Claim \ref{aff:capp}.

        Therefore the family $\mathcal F_\kappa^\alpha(\mathcal K)$ satisfies the desired properties.        
    \end{proof}

    Tying together Example \ref{exp:Kfree}, Remark \ref{rem:Kfree} and Proposition \ref{prop:Kfree}, Theorem \ref{thm:fin} follows, since for the case there is no subdivision of a star on the family $\mathcal K$ we can take $\mathcal F_\kappa(\mathcal K)=\bigcup_{\alpha<\kappa^+}\mathcal F_\kappa^\alpha(\mathcal K)$, otherwise, there is an $n<\omega$ such that $\mathbf A_\kappa(\mathcal K)=\mathbf A_\kappa^n(\mathcal K)$, which has a strongly universal family $\mathcal F_\kappa^n(\mathcal K)$ of cardinality at most countable.

    Yet, it is unclear what happens to the complexity when we allow for the family $\mathcal K$ to be infinite, both in terms of characterizing $\mathcal K$ for when the complexity has to be smaller or when it is higher, in fact, it is open the following question.

    \begin{quest}
        Is it consistent with ZFC that there is a family $\mathcal K$ of finite graphs such that 
        \[\Cpx(\mathbf A_{\aleph_0}(\mathcal K))>\aleph_1 ?\]
    \end{quest}

    In Section \ref{sec:Other}, some infinite families of forbidden graphs will be shown to have $\kappa^+$ strong complexity, mostly due to emerging properties of the class of them forbidden.
\section{An Infinite Case of High Complexity}\label{sec:Infinite}
    
    While the last section explored the case of finite forbidden subgraphs, the present section will be dedicated to study the case when the forbidden subgraph is infinite.
    Unlike the remarks made at the end of last section about the open question about infinite families of finite graphs, there is in fact a countable graph $K$ such that the class of countable rayless graphs with it forbidden has always complexity $\mathfrak c$.
    Thus, it is consistent that the complexity is strictly larger than $\aleph_1$.

    \begin{exemp}\label{ex:inf}
        There is a countable rayless graph $K$ of rank $1$ such that $\Cpx(\mathbf A_{\aleph_0}(K))=\mathfrak c$.
    \end{exemp}
    \begin{proof}
    The forbidden graph, denoted by $K$ (see Figure \ref{fig:ografo}), is defined by taking one cycle of each size, all disjoint and a new vertex --- which will be denoted by $w$ --- and creating an edge between it and exactly one vertex of each cycle.

    It follows from construction that it is a countable rayless graph of rank 1.
    Now, a family of graphs in $\mathbf A_{\aleph_0}(K)$ will be defined in a way that assuming that the complexity is smaller than $\mathfrak c$ would lead to a contradiction.

    Consider an infinite proper subset $X\subseteq\mathbb N_{\geq3}$ and a natural number $n\in X$.
    Take the cycles $C_n$ and $C_i$ for each $i\in\mathbb N_{\geq3}\setminus X$ such that $i<n$.
    Also take for each of these cycles one vertex $v_i\in V(C_i)$.
    Define the finite graph $G_X(n)$ as the gluing of the cycles above via the identification of the fixed vertices (see Figure \ref{fig:famdiffin}).

     For each infinite proper subset $X\subseteq\mathbb N_{\geq3}$, define a countable graph $G_X$ of rank 1 (see Figure \ref{fig:famdif}) by a similar construction to that of the graph $K$, with a center vertex denoted by $v_x$ connected via an edge to the center vertex of a copy of $G_X(n)$, for each $n\in X$.

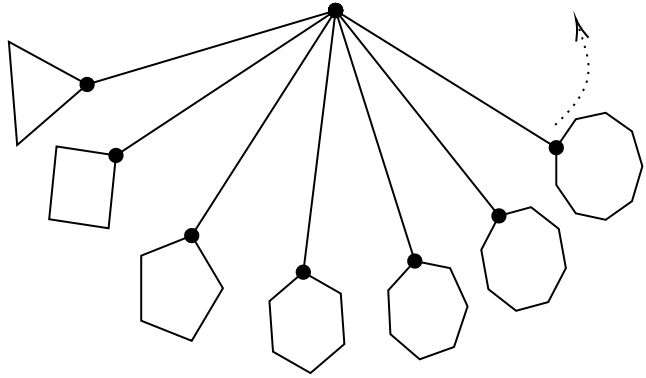
\begin{figure}
	\centering
	\tikzset{every picture/.style={line width=0.75pt}} %set default line width to 0.75pt        

\begin{tikzpicture}[x=0.75pt,y=0.75pt,yscale=-1,xscale=1]
%uncomment if require: \path (0,249); %set diagram left start at 0, and has height of 249

%Straight Lines [id:da16231511123536513] 
\draw    (224.8,3.83) -- (99.47,41.16) ;
\draw [shift={(99.47,41.16)}, rotate = 163.42] [color={rgb, 255:red, 0; green, 0; blue, 0 }  ][fill={rgb, 255:red, 0; green, 0; blue, 0 }  ][line width=0.75]      (0, 0) circle [x radius= 3.35, y radius= 3.35]   ;
\draw [shift={(224.8,3.83)}, rotate = 163.42] [color={rgb, 255:red, 0; green, 0; blue, 0 }  ][fill={rgb, 255:red, 0; green, 0; blue, 0 }  ][line width=0.75]      (0, 0) circle [x radius= 3.35, y radius= 3.35]   ;
%Shape: Triangle [id:dp323223406646291] 
\draw   (99.47,41.16) -- (64.2,71.73) -- (60,19.62) -- cycle ;
%Straight Lines [id:da06670584092931597] 
\draw    (224.8,3.83) -- (114.02,76.99) ;
\draw [shift={(114.02,76.99)}, rotate = 146.56] [color={rgb, 255:red, 0; green, 0; blue, 0 }  ][fill={rgb, 255:red, 0; green, 0; blue, 0 }  ][line width=0.75]      (0, 0) circle [x radius= 3.35, y radius= 3.35]   ;
\draw [shift={(224.8,3.83)}, rotate = 146.56] [color={rgb, 255:red, 0; green, 0; blue, 0 }  ][fill={rgb, 255:red, 0; green, 0; blue, 0 }  ][line width=0.75]      (0, 0) circle [x radius= 3.35, y radius= 3.35]   ;
%Shape: Diamond [id:dp2842697629283941] 
\draw   (114.02,76.99) -- (110.3,113.74) -- (80.33,109.19) -- (84.04,72.44) -- cycle ;
%Straight Lines [id:da9933176220630147] 
\draw    (224.8,3.83) -- (152.23,117.44) ;
\draw [shift={(152.23,117.44)}, rotate = 122.57] [color={rgb, 255:red, 0; green, 0; blue, 0 }  ][fill={rgb, 255:red, 0; green, 0; blue, 0 }  ][line width=0.75]      (0, 0) circle [x radius= 3.35, y radius= 3.35]   ;
\draw [shift={(224.8,3.83)}, rotate = 122.57] [color={rgb, 255:red, 0; green, 0; blue, 0 }  ][fill={rgb, 255:red, 0; green, 0; blue, 0 }  ][line width=0.75]      (0, 0) circle [x radius= 3.35, y radius= 3.35]   ;
%Shape: Polygon [id:dp10662886618313638] 
\draw   (167.97,144) -- (152.23,170.56) -- (126.75,160.41) -- (126.75,127.58) -- (152.23,117.44) -- cycle ;
%Straight Lines [id:da4531729486102357] 
\draw    (224.8,3.83) -- (208.51,135.9) ;
\draw [shift={(208.51,135.9)}, rotate = 97.03] [color={rgb, 255:red, 0; green, 0; blue, 0 }  ][fill={rgb, 255:red, 0; green, 0; blue, 0 }  ][line width=0.75]      (0, 0) circle [x radius= 3.35, y radius= 3.35]   ;
\draw [shift={(224.8,3.83)}, rotate = 97.03] [color={rgb, 255:red, 0; green, 0; blue, 0 }  ][fill={rgb, 255:red, 0; green, 0; blue, 0 }  ][line width=0.75]      (0, 0) circle [x radius= 3.35, y radius= 3.35]   ;
%Shape: Polygon [id:dp10773627050131807] 
\draw   (229.21,172.18) -- (212.12,186.83) -- (193.22,176.02) -- (191.42,150.55) -- (208.51,135.9) -- (227.41,146.72) -- cycle ;
%Straight Lines [id:da9410350327547267] 
\draw    (224.8,3.83) -- (264.73,130.28) ;
\draw [shift={(264.73,130.28)}, rotate = 72.48] [color={rgb, 255:red, 0; green, 0; blue, 0 }  ][fill={rgb, 255:red, 0; green, 0; blue, 0 }  ][line width=0.75]      (0, 0) circle [x radius= 3.35, y radius= 3.35]   ;
%Shape: Polygon [id:dp4952791545973285] 
\draw   (284.52,173.66) -- (267.2,179.89) -- (252.43,167.18) -- (251.33,145.11) -- (264.73,130.28) -- (282.54,133.87) -- (291.35,153.18) -- cycle ;
%Straight Lines [id:da32356818540412136] 
\draw    (224.8,3.83) -- (307.17,107.5) ;
\draw [shift={(307.17,107.5)}, rotate = 51.53] [color={rgb, 255:red, 0; green, 0; blue, 0 }  ][fill={rgb, 255:red, 0; green, 0; blue, 0 }  ][line width=0.75]      (0, 0) circle [x radius= 3.35, y radius= 3.35]   ;
\draw [shift={(224.8,3.83)}, rotate = 51.53] [color={rgb, 255:red, 0; green, 0; blue, 0 }  ][fill={rgb, 255:red, 0; green, 0; blue, 0 }  ][line width=0.75]      (0, 0) circle [x radius= 3.35, y radius= 3.35]   ;
%Shape: Polygon [id:dp025996255765314724] 
\draw   (340.93,133.87) -- (332.02,151.01) -- (315.83,155.4) -- (301.84,144.48) -- (298.26,124.64) -- (307.17,107.5) -- (323.36,103.1) -- (337.34,114.03) -- cycle ;
%Straight Lines [id:da37946812228852556] 
\draw    (224.8,3.83) -- (336.09,73.08) ;
\draw [shift={(336.09,73.08)}, rotate = 31.89] [color={rgb, 255:red, 0; green, 0; blue, 0 }  ][fill={rgb, 255:red, 0; green, 0; blue, 0 }  ][line width=0.75]      (0, 0) circle [x radius= 3.35, y radius= 3.35]   ;
%Shape: Polygon [id:dp7148164693191996] 
\draw   (379.51,82.46) -- (374.27,100.09) -- (361.01,109.47) -- (345.93,106.22) -- (336.09,91.84) -- (336.09,73.08) -- (345.93,58.71) -- (361.01,55.45) -- (374.27,64.83) -- cycle ;
%Curve Lines [id:da6486131004392535] 
\draw  [dash pattern={on 0.84pt off 2.51pt}]  (335.5,61.58) .. controls (359.18,39.81) and (352.93,29.65) .. (346.59,9.94) ;
\draw [shift={(346,8.08)}, rotate = 72.73] [color={rgb, 255:red, 0; green, 0; blue, 0 }  ][line width=0.75]    (10.93,-3.29) .. controls (6.95,-1.4) and (3.31,-0.3) .. (0,0) .. controls (3.31,0.3) and (6.95,1.4) .. (10.93,3.29)   ;

\end{tikzpicture}
	\caption{The Graph $K$}
	\label{fig:ografo}
\end{figure}

\begin{figure}
    \centering
    \begin{minipage}{0.45\textwidth}
        \centering
                \tikzset{every picture/.style={line width=0.75pt}} %set default line width to 0.75pt        

\begin{tikzpicture}[x=0.75pt,y=0.75pt,yscale=-1,xscale=1]
	%uncomment if require: \path (0,466); %set diagram left start at 0, and has height of 466
	
	%Shape: Polygon [id:dp7340138628186886] 
	\draw  [color={rgb, 255:red, 208; green, 2; blue, 27 }  ,draw opacity=1 ] (95.31,84.4) -- (0,80.3) -- (51.45,5.16) -- cycle ;
	%Shape: Polygon [id:dp9402478531083105] 
	\draw  [color={rgb, 255:red, 208; green, 2; blue, 27 }  ,draw opacity=1 ] (127.88,117.23) -- (113.77,160.03) -- (67.08,170) -- (34.5,137.17) -- (48.61,94.37) -- (95.31,84.4) -- cycle ;
	%Shape: Polygon [id:dp9812453912717621] 
	\draw  [color={rgb, 255:red, 74; green, 144; blue, 226 }  ,draw opacity=1 ] (214,59.06) -- (190.49,104.71) -- (137.67,115.99) -- (95.31,84.4) -- (95.31,33.73) -- (137.67,2.13) -- (190.49,13.41) -- cycle ;
	%Shape: Ellipse [id:dp9030689197676612] 
	\draw  [fill={rgb, 255:red, 0; green, 0; blue, 0 }  ,fill opacity=1 ] (92.43,84.4) .. controls (92.43,82.91) and (93.72,81.71) .. (95.31,81.71) .. controls (96.89,81.71) and (98.18,82.91) .. (98.18,84.4) .. controls (98.18,85.88) and (96.89,87.09) .. (95.31,87.09) .. controls (93.72,87.09) and (92.43,85.88) .. (92.43,84.4) -- cycle ;

\end{tikzpicture}
        \caption{$G_X(7)$ for $X=\{4,5,7,9,...\}$}
        \label{fig:famdiffin}
    \end{minipage}\hfill
    \begin{minipage}{0.48\textwidth}
        \centering
        \tikzset{every picture/.style={line width=0.75pt}} %set default line width to 0.75pt        

\begin{tikzpicture}[x=0.75pt,y=0.75pt,yscale=-1,xscale=1]
%uncomment if require: \path (0,219); %set diagram left start at 0, and has height of 219

%Shape: Triangle [id:dp6552648155981178] 
\draw  [color={rgb, 255:red, 208; green, 2; blue, 27 }  ,draw opacity=1 ] (33.01,84.82) -- (0,93.84) -- (6.05,60.19) -- cycle ;
%Shape: Diamond [id:dp6167366791271857] 
\draw  [color={rgb, 255:red, 74; green, 144; blue, 226 }  ,draw opacity=1 ] (33.01,84.82) -- (52.12,105.91) -- (33.01,126.99) -- (13.9,105.91) -- cycle ;
%Shape: Triangle [id:dp13930720450518363] 
\draw  [color={rgb, 255:red, 208; green, 2; blue, 27 }  ,draw opacity=1 ] (69.28,162.46) -- (31.85,153.99) -- (48.04,130.65) -- cycle ;
%Shape: Polygon [id:dp8338794068229117] 
\draw  [color={rgb, 255:red, 74; green, 144; blue, 226 }  ,draw opacity=1 ] (77.53,183.45) -- (59.3,196.71) -- (39.79,183.93) -- (45.95,162.76) -- (69.28,162.46) -- cycle ;
%Shape: Triangle [id:dp2832997792863342] 
\draw  [color={rgb, 255:red, 208; green, 2; blue, 27 }  ,draw opacity=1 ] (137.14,125.15) -- (138.54,162.68) -- (105.87,147.32) -- cycle ;
%Shape: Polygon [id:dp9105625216704375] 
\draw  [color={rgb, 255:red, 208; green, 2; blue, 27 }  ,draw opacity=1 ] (111.32,193.87) -- (104.19,174.64) -- (117.81,159.05) -- (138.54,162.68) -- (145.66,181.91) -- (132.05,197.5) -- cycle ;
%Curve Lines [id:da5010830575311168] 
\draw [color={rgb, 255:red, 0; green, 0; blue, 0 }  ,draw opacity=1 ] [dash pattern={on 0.84pt off 2.51pt}]  (264.67,92.63) .. controls (289.9,81.08) and (301.46,41.72) .. (291.36,17.8) ;
\draw [shift={(290.54,15.99)}, rotate = 64.11] [color={rgb, 255:red, 0; green, 0; blue, 0 }  ,draw opacity=1 ][line width=0.75]    (10.93,-3.29) .. controls (6.95,-1.4) and (3.31,-0.3) .. (0,0) .. controls (3.31,0.3) and (6.95,1.4) .. (10.93,3.29)   ;
%Shape: Polygon [id:dp5404484154468272] 
\draw  [color={rgb, 255:red, 74; green, 144; blue, 226 }  ,draw opacity=1 ] (180.87,169.67) -- (166.61,182.01) -- (147.78,178.9) -- (138.54,162.68) -- (145.86,145.57) -- (164.22,140.45) -- (179.8,151.17) -- cycle ;
%Shape: Polygon [id:dp6273772774719591] 
\draw  [color={rgb, 255:red, 208; green, 2; blue, 27 }  ,draw opacity=1 ] (228.96,126.52) -- (192.02,133.35) -- (204.39,98.92) -- cycle ;
%Shape: Polygon [id:dp9435930417428745] 
\draw  [color={rgb, 255:red, 208; green, 2; blue, 27 }  ,draw opacity=1 ] (239.29,161.31) -- (218.57,167.09) -- (203.04,152.58) -- (208.24,132.3) -- (228.96,126.52) -- (244.48,141.03) -- cycle ;
%Shape: Polygon [id:dp3889546228419718] 
\draw  [color={rgb, 255:red, 208; green, 2; blue, 27 }  ,draw opacity=1 ] (261.57,155.04) -- (249.39,168.13) -- (231.22,169.04) -- (217.71,157.23) -- (216.78,139.62) -- (228.96,126.52) -- (247.13,125.62) -- (260.64,137.43) -- cycle ;
%Shape: Polygon [id:dp5785574404853133] 
\draw  [color={rgb, 255:red, 74; green, 144; blue, 226 }  ,draw opacity=1 ] (263.57,108.51) -- (263.15,121.73) -- (254.06,131.6) -- (240.56,133.49) -- (228.96,126.52) -- (224.7,113.96) -- (229.76,101.68) -- (241.78,95.42) -- (255.13,98.12) -- cycle ;
%Straight Lines [id:da2691364291980102] 
\draw    (167.19,11) -- (33.01,84.82) ;
\draw [shift={(33.01,84.82)}, rotate = 151.18] [color={rgb, 255:red, 0; green, 0; blue, 0 }  ][fill={rgb, 255:red, 0; green, 0; blue, 0 }  ][line width=0.75]      (0, 0) circle [x radius= 3.35, y radius= 3.35]   ;
\draw [shift={(167.19,11)}, rotate = 151.18] [color={rgb, 255:red, 0; green, 0; blue, 0 }  ][fill={rgb, 255:red, 0; green, 0; blue, 0 }  ][line width=0.75]      (0, 0) circle [x radius= 3.35, y radius= 3.35]   ;
%Straight Lines [id:da12069354042923564] 
\draw    (167.19,11) -- (69.28,162.46) ;
\draw [shift={(69.28,162.46)}, rotate = 122.88] [color={rgb, 255:red, 0; green, 0; blue, 0 }  ][fill={rgb, 255:red, 0; green, 0; blue, 0 }  ][line width=0.75]      (0, 0) circle [x radius= 3.35, y radius= 3.35]   ;
\draw [shift={(167.19,11)}, rotate = 122.88] [color={rgb, 255:red, 0; green, 0; blue, 0 }  ][fill={rgb, 255:red, 0; green, 0; blue, 0 }  ][line width=0.75]      (0, 0) circle [x radius= 3.35, y radius= 3.35]   ;
%Straight Lines [id:da8458539524401246] 
\draw    (167.19,11) -- (138.54,162.68) ;
\draw [shift={(138.54,162.68)}, rotate = 100.7] [color={rgb, 255:red, 0; green, 0; blue, 0 }  ][fill={rgb, 255:red, 0; green, 0; blue, 0 }  ][line width=0.75]      (0, 0) circle [x radius= 3.35, y radius= 3.35]   ;
\draw [shift={(167.19,11)}, rotate = 100.7] [color={rgb, 255:red, 0; green, 0; blue, 0 }  ][fill={rgb, 255:red, 0; green, 0; blue, 0 }  ][line width=0.75]      (0, 0) circle [x radius= 3.35, y radius= 3.35]   ;
%Straight Lines [id:da9856591141722054] 
\draw    (167.19,11) -- (228.96,126.52) ;
\draw [shift={(228.96,126.52)}, rotate = 61.87] [color={rgb, 255:red, 0; green, 0; blue, 0 }  ][fill={rgb, 255:red, 0; green, 0; blue, 0 }  ][line width=0.75]      (0, 0) circle [x radius= 3.35, y radius= 3.35]   ;

\end{tikzpicture}
        \caption{Example for $X=\{4,5,7,9,...\}$}
        \label{fig:famdif}
    \end{minipage}
\end{figure}

    Towards a contradiction, assume there is a universal family of countable graphs $\mathcal H$ with less than $\mathfrak c $ elements.
    Take an almost disjoint family $\mathcal X$ with cardinality $|\mathcal H|^+$
    and for each $X\in\mathcal X$, take an immersion $\Phi_X:G_X\to H_X$, with $H_X\in\mathcal H$.
    Through arguments of cardinality, we may assume that all $H_X$ are the same $H$ and there is $v_0\in V(H)$ such that $\Phi_X(v_x)=v_0$ for every $X\in\mathcal X$.

    Fix two distinct elements $X,Y\in\mathcal X$ and denote their finite intersection as $Z=X\cap Y$.
    We will construct recursively immersions $\Psi_m$ of the subgraph
    \[\{w\}\cup\left(\bigcup_{i\in Z}C_i\right)\cup\left(\bigcup_{j<m}C_j\right)\subseteq K\]
    into $H$ such that $\Psi_m\subseteq\Psi_{m+1}$.

    Start by taking $\Psi_0$ as the restriction of $\Phi_X$ to $v_x$ and the $C_n\subseteq G_X(n)$ for each $n\in Z$.
    Assuming $\Psi_m$, if $m\in Z$, then $\Psi_{m+1}=\Psi_m$. 
    Otherwise, without loss of generality, $m\not\in X$, therefore there is $n\in X$ such that $n\geq m$ and $\Phi_X[G_X(n)]$ is disjoint from the image of $\Psi_m$, which is finite.
    Thus, there is a copy of $C_m$ in $G_X(n)$ through which we may extend $\Psi_m$ to $\Psi_{m+1}$.

    Taking the limit of this construction we end end with $\Psi=\bigcup_{n\in\mathbb N}\Psi_n:K\to H$ an immersion.
    This concludes the contradiction, and therefore there can not be a universal family with less than $\mathfrak c$ elements.
    \end{proof}

    It is interesting to remark that the arguments presented on Example \ref{ex:inf} are solely around the cardinality, and could be replicated for any cardinal lower than $\mathfrak c$, as is stated in the result below.
    \begin{teo}
    	There is a countable rayless graph $K$ of rank 1 such that for every infinite cardinal $\kappa$
    	\[\Cpx(\mathbf A_\kappa(K))\geq\mathfrak{c}.\]
    \end{teo}
	
	As a result, considering the negation of CH, as long as $\kappa$ is an infinite cardinal with $\kappa^+<\mathfrak{c}$, the theorem above states that even a countable graph of rank 1 being forbidden is enough to make the complexity high.

\section{Forbidding Infinite Bouquets}
	There are some rayless graphs $K$ with a kernel set $N$ such that every connected component of $K-N$ is isomorphic and connects the same way to $F$.
	Examples of this type of graphs include the complete bipartite graphs $K_{n,\kappa}$ with $n$ a natural number and $\kappa$ any infinite cardinal, as well as the bouquets of finite complete graphs and bouquets of cycles.	
	%such is the case for every rayless complete bipartite graph $K_{n,\kappa}$ and bouquets of a complete finite graph, or cycle.
	The classes of those subgraphs forbidden were studied (see \cite{KomPa2} and \cite{Kom}).
	Here we will see that these prohibitions result in small complexities in the countable rayless case due to the aforementioned property, formally defined below.

	\begin{defn}
		Let $K$ be a graph and $N\subseteq V(K)$ be a set of vertices of $K$ such that $K-N$ is connected.
		The bouquet of $K$ with stem $N$, denoted by $B(K,N)$, is defined by taking countably many pairwise disjoint copies fo $K$ and gluing them together via the identification of their copies of $N\subseteq K$ (see Figure \ref{fig:bouquet}). 
		%It is defined up to isomorphism as the graph with $N^\prime\subseteq V(B(K,N))$ and a bijection $s:N\to N^\prime$ such that there are countably many connected components of $B(K,N)-N^\prime$ and for each such connected component $C$ there is an isomorphism 
%		\[i:K\to B(K,N)[C\cup N^\prime]\]
%		whose restriction to $N$ is  the function $s$.
	\end{defn}
	\begin{figure}[ht]
		\centering
			\includegraphics[width=0.8\textwidth]{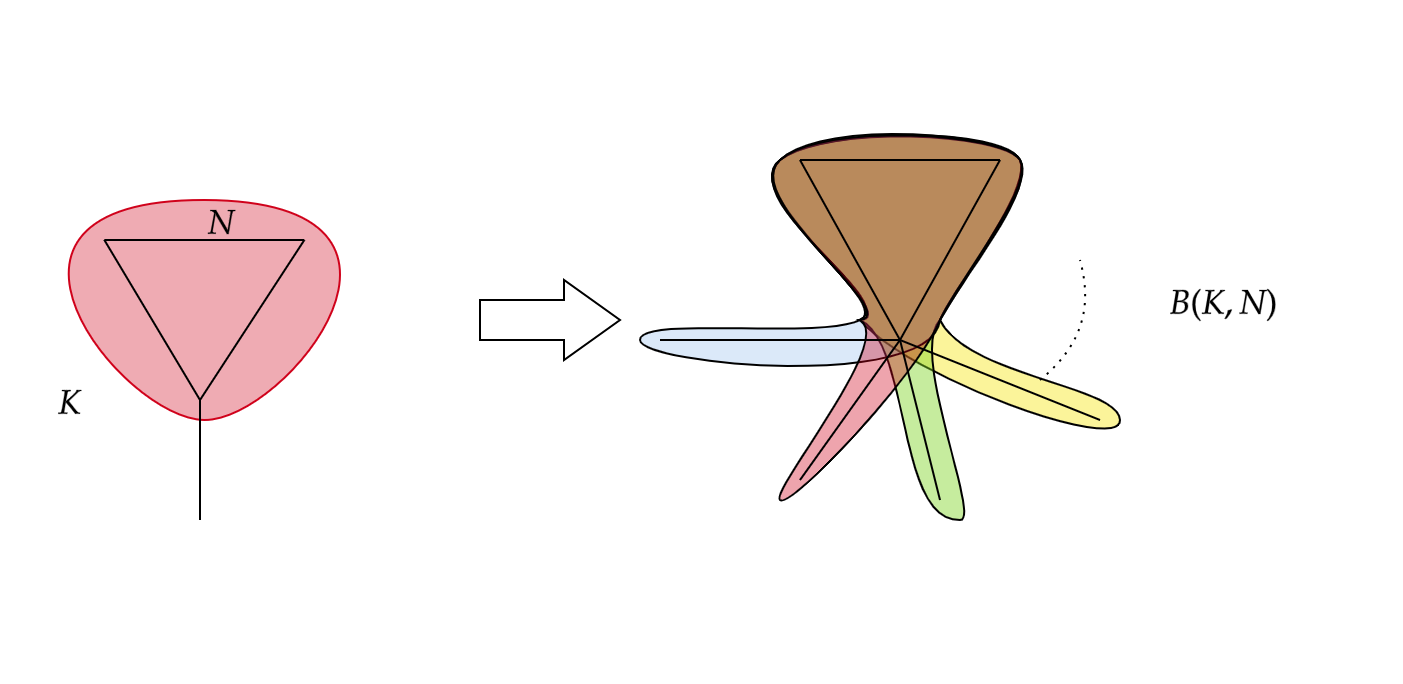}
			\caption{An example of a bouquet.}
			\label{fig:bouquet}
	\end{figure}
	\begin{rem}\hfill
		\begin{itemize}
			\item If $K$ is finite, then its bouquet with any stem is a rayless graph of rank 1, and $N$ is the minimal kernel.
			\item For each natural number $n$, the complete bipartite graph $K_{n,\aleph_0}$ is a bouquet of finite graphs.
			\item The usually known bouquets of cycles and bouquet of complete graphs are bouquets with singleton stems in this definition.
		\end{itemize}
	\end{rem}

	Except for the cases where those bouquets generate a subdivision of the infinite star, as was said after Remark \ref{rem:Kfree}, the lower bound of $\Cpx(\mathbf A_\kappa(B(K,N)))$ remains (sharply) at $\kappa^+$, for every infinite cardinal $\kappa$ and finite graph $K$.

	Let us consider the following characterization of a graph being a bouquet:
\begin{prop}
	A countable rayless graph $B$ of rank 1 is a isomorphic to a bouquet $B(K,N)$ if, and only if, there is a separator $N^\prime\subseteq B$ isomorphic to $N$ via $s:N^\prime\to N$ such that $B-N^\prime$ has countably many connected components, for each of them $C\subseteq B-N^\prime$, there is an isomorphism 
	\[i_C:C\cup N^\prime\to K,\]
	whose restriction to $N^\prime$ is $i\restriction_{N^\prime}=s:N^\prime\to N$. 
\end{prop}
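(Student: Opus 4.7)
The plan is to unfold the definition of the bouquet and match it with the data in the statement. Throughout, let $j : N \hookrightarrow B(K,N)$ denote the common identification of the stem, and let $\pi_n : K \hookrightarrow B(K,N)$ be the inclusion of the $n$-th copy, chosen so that $\pi_n|_N = j$ for every $n \in \mathbb N$. Since $K - N$ is assumed connected, $B(K,N) - j(N)$ is a disjoint union of countably many connected components, each isomorphic to $K - N$ via the appropriate $\pi_n$.

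For the forward direction, suppose $\phi : B(K,N) \to B$ is an isomorphism. Set $N' = \phi(j(N))$ and $s = j^{-1} \circ \phi^{-1}|_{N'} : N' \to N$; then $s$ is an isomorphism between the induced subgraphs, and $B - N' = \phi(B(K,N) - j(N))$ inherits countably many connected components, one for each $n$. Given such a component $C = \phi(\pi_n(K - N))$, the map $i_C := \pi_n^{-1} \circ \phi^{-1}|_{C \cup N'} : C \cup N' \to K$ is an isomorphism, and $i_C|_{N'} = j^{-1} \circ \phi^{-1}|_{N'} = s$ by construction, using $\pi_n|_N = j$.

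For the converse, enumerate the components of $B - N'$ as $\{C_n : n \in \mathbb N\}$ and fix the provided isomorphisms $i_n : C_n \cup N' \to K$ with $i_n|_{N'} = s$. Define $\phi : B \to B(K,N)$ by $\phi(v) = j(s(v))$ if $v \in N'$ and $\phi(v) = \pi_n(i_n(v))$ if $v \in C_n$. The two cases are disjoint, so $\phi$ is well defined; and it is a bijection since $s$ identifies $N'$ with $j(N)$, while each $i_n|_{C_n}$ is a bijection $C_n \to K - N$, composed with $\pi_n$ to yield a bijection onto $\pi_n(K - N)$.

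The remaining concrete work is verifying edge preservation in both directions. Edges of $B$ split by endpoint location: edges inside $N'$ correspond via $s$ to edges of $N \subseteq K$, hence to edges in $j(N)$; edges inside a single $C_n$, or between $N'$ and $C_n$, correspond via $i_n$ to edges of $K$ sitting inside $\pi_n(K)$; and no edge of $B$ connects distinct components. On the bouquet side, the images $\pi_n(K)$ and $\pi_m(K)$ for $n \neq m$ overlap only on $j(N)$, so the same case analysis closes symmetrically. The only point requiring care is the coherence of the various $i_n$ on $N'$, without which the piecewise definition of $\phi$ would fail to be a single graph morphism; this coherence is precisely the hypothesis $i_n|_{N'} = s$, so no genuine obstacle arises.
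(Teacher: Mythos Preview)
Your proof is correct and follows essentially the same approach as the paper: in both directions one transports along the given isomorphism $\phi$ (forward) or glues the component isomorphisms $i_n$ via the canonical inclusions of the copies of $K$ into the bouquet (converse). Your argument is in fact more explicit than the paper's, which simply asserts that the piecewise-defined map is an isomorphism without carrying out the edge-by-edge verification you supply.
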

	\begin{proof}
		Suppose that the graph $B$ is isomorphic to the bouquet $B(K,N)$ via the isomorphism $\phi:B(K,N)\to B$.
		Define $N^\prime=\phi[N]$ with the isomorphism $s:N^\prime\to N$ given by the restriction of the inverse of $\phi$.
		For each connected component $C\subseteq B-N^\prime$, note that it is isomorphic to a connected component of $B(K,N)-N$.
		It follows from the definition of $B(K,N)$ that $\phi^{-1}[C]\cup N$ is isomorphic to $K$ with $N$ rigid.
		Therefore we may define isomorphisms $i_C$ as in the statement of the proposition.
		
		For the other implication, consider in $B$ the separator $N^\prime$, as well as the isomorphisms $s:N^\prime\to N$ and $i_C:C\cup N^\prime\to K$.
		Enumerate the connected components of $B-N^\prime$ as $\{C_n:n\in\mathbb N\}$.
		Also enumerate the connected components of $B(K,N)-N$ as $\{H_n:n\in\mathbb N\}$.
		As was stated above, there is an isomorphism $j_n:K\to H_n\cup N$ with $j_n\restriction_N=\id_N$.
		
		Define the following morphism:
		\begin{center}
		\begin{tabular}{rcl}
			$\phi:B$ & $\to$ & $B(K,N)$\\
			$v$  & $\mapsto$ & $\begin{cases}s(v)\text{, if }v\in V(N^\prime)\\ j_n\circ i_{C_n}(v)\text{, if }v\in V(C_n)\text{ for some }n\in\mathbb N\end{cases}$
		\end{tabular}.
		\end{center}
		
		Note that it is in fact an isomorphism.
	\end{proof}

	In order to construct a universal family we will define a stronger version of a subgraph capturing parts of another graph.
	
	\begin{defn}
		Given two graphs $K$ and $G$, a subgraph $F\subseteq G$ captures $K$ with $X\subseteq F$ rigid if for every immersion $\phi:K\to G$ there is a representative $\phi^\prime$ of $\phi$ in $F$ such that ${\phi^\prime}^{-1}[X]=\phi^{-1}[X]$, where they coincide.
	\end{defn} 
	
	 As in the case for the weaker notion of capturing, we can always consider that there is a finite superset capturing $K$, with it rigid.
	 
	 \begin{lem}
	 	Let $G$ be a rayless graph and $K$ a finite graph.
	 	For any pair of finite subgraphs $X\subseteq F\subseteq G$, there is a finite subgraph $\widetilde F\subseteq G$ such that $F\subseteq\widetilde F$ and it captures the parts of $K$ on $G$ with $X$ rigid.
	 \end{lem}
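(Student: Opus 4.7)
The plan is to run the inductive construction from the proof of Lemma \ref{lem:capture} with $\mathcal K=\{K\}$ and then observe that the representative it produces automatically satisfies the stronger rigidity constraint on $X$, provided one enforces $X\subseteq F$ from the start. I would argue by induction on the rank of $G$; the finite case is trivial with $\widetilde F=G$.

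For the inductive step I would first enlarge $F$ to a kernel of $G$ still containing $X$ (possible by finiteness of $F\cup X$ and the existence of arbitrarily refined kernels in rayless graphs). Setting $k=|V(K)|$, I would build a pairwise disjoint chain $H_0,\dots,H_k$ of unions of finitely many connected components of $G-F$ exactly as in Lemma \ref{lem:capture}: for each restriction $f$ to $F$ of an immersion $\widetilde K\to G$ (with $\widetilde K\subseteq K$) whose image avoids $\overline H_i=H_0\cup\dots\cup H_{i-1}$, choose a witness $\phi_f$, and let $H_i$ be the union of components met by these witnesses. Then $H=F\cup H_0\cup\dots\cup H_k$ has rank strictly smaller than $G$, so by the inductive hypothesis applied to $H$ with the pair $X\subseteq F$ there is a finite $\widetilde F\subseteq H$ with $F\subseteq\widetilde F$ capturing $K$ on $H$ with $X$ rigid. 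By transitivity of capturing, everything reduces to showing that $H\subseteq G$ itself captures $K$ with $X$ rigid.

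For this last reduction I would replay the pigeonhole argument: given an immersion $\phi:\widetilde K\to G$, some $H_i$ is disjoint from $\mbox{im}\,\phi$, and setting $\overline K=\widetilde K-\phi^{-1}[H-F]$, $\overline\phi=\phi\restriction\overline K$, and $f$ the restriction of $\overline\phi$ to $F$, the chosen $\phi_f$ is a representative of $\overline\phi$ in $F\cup H_i$. Define $\Phi(v)=\phi(v)$ when $\phi(v)\in H$ and $\Phi(v)=\phi_f(v)$ otherwise; as in Lemma \ref{lem:capture} this is an immersion $\widetilde K\to H$ representing $\phi$.

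The step I expected to be the main obstacle was controlling $\Phi^{-1}[X]$: the worry is that the gluing $\Phi$ introduces new preimages of $X$ through $\phi_f$. The key observation is that $X\subseteq F$ makes this rigidity free of charge. Indeed, if $\Phi(v)\in X$ then $\Phi(v)\in F$, so either $\Phi(v)=\phi(v)$ and $\phi(v)\in X$ directly, or $\Phi(v)=\phi_f(v)\in F$; in the latter case $v\in\phi_f^{-1}[F]=\overline\phi^{-1}[F]$ by the choice of $\phi_f$, and on this set $\phi_f=\overline\phi=\phi$, so $\phi(v)=\phi_f(v)\in X$. The converse inclusion $\phi^{-1}[X]\subseteq\Phi^{-1}[X]$ is immediate since $X\subseteq H$ forces $\Phi(v)=\phi(v)$ on $\phi^{-1}[X]$. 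Thus the strengthening requires no modification to the Lemma \ref{lem:capture} construction beyond the initial enlargement of $F$ into a kernel containing $X$.
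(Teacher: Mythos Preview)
Your argument is correct and follows the same inductive scheme as the paper's proof (both rerun the construction of Lemma~\ref{lem:capture}), but you sharpen the paper's sketch in a useful way. The paper's one-line proof says only that ``the immersions chosen take into consideration the rigidity of $X$'', suggesting that the witness-selection step must be modified. You instead observe that no such modification is needed: because $X\subseteq F$ and the class $f\in\mathcal I_i$ already records the full restriction to $F$ (so that $\phi_f^{-1}[F]=\overline\phi^{-1}[F]$ and $\phi_f=\phi$ on that set), the preimage of $X$ under the glued representative $\Phi$ automatically coincides with $\phi^{-1}[X]$. Your case analysis is sound; in fact your ``latter case'' ($\phi(v)\notin H$ but $\Phi(v)=\phi_f(v)\in X\subseteq F$) is vacuous, since $\phi_f(v)\in F$ forces $v\in\overline\phi^{-1}[F]$ and hence $\phi(v)\in F\subseteq H$. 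The only genuine adjustments to Lemma~\ref{lem:capture} are invoking the stronger (rigid) inductive hypothesis on $H$ and the routine check that capturing with $X$ rigid is transitive, both of which you handle.
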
 
	 
	 The proof of the lemma above follows analogously to the proof of Lemma \ref{lem:capture}, with the only difference being that, when choosing the finitely many connected components to construct $H$ in the induction step, the immersions chosen take into consideration the rigidity of $X$.
	 
	 \begin{prop}
	 	Let $K$ be a connected finite graph with $N\subseteq V(K)$ such that $K-N$ is connected.
	 	For every cardinal $\kappa$ and non-zero ordinal number $\alpha<\kappa^+$ there is a family $\mathcal F^\alpha_\kappa(B(K,N))\subseteq\mathbf A_\kappa^\alpha(B(K,N))$ of cardinality up to $\kappa$ such that, for every $G\in A_\kappa^\alpha(B(K,N))$ and finite set of vertices $X\subseteq V(G)$, there is $H\in\mathcal F^\alpha_\kappa(B(K,N))$ and a strong immersion $\Phi:G\to H$ whose image $\im\Phi\subseteq H$ captures the parts of $K$ with $\Phi[X]$ rigid.
	 \end{prop}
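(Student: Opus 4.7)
The plan is to mirror the inductive construction of Proposition \ref{prop:rayl} and Proposition \ref{prop:Kfree}, enlarging the index set to pairs $(F,X^\prime)$ consisting of a finite connected graph and a finite distinguished vertex subset (to accommodate the rigidity requirement), and restricting the attaching immersions at each stage so that the stronger capturing-with-rigidity property is built in from the start. Since there are only countably many such pairs up to isomorphism, each successor stage yields a countable family and limit stages inherit a family of size $\mbox{cf}(\alpha)$, giving an overall cardinality bound of $\kappa$.

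At rank $\alpha=1$, members of $\mathbf A_\kappa^1(B(K,N))$ are finite and thus automatically $B(K,N)$-free (since $B(K,N)$ is infinite), so I take $\mathcal F_\kappa^1(B(K,N))$ to be the countable family of all finite connected graphs up to isomorphism; given such $G$ and a finite $X\subseteq V(G)$, the isomorphism to a copy $H$ in the family is a strong immersion whose image is all of $H$, and the requested capturing with $\Phi[X]$ rigid is immediate. For a limit ordinal $\alpha$, set $\mathcal F_\kappa^\alpha(B(K,N))=\bigcup_{\gamma<\mbox{cf}(\alpha)}\mathcal F_\kappa^{\beta_\gamma+1}(B(K,N))$ along a cofinal sequence $\langle\beta_\gamma\rangle$.

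For a successor $\alpha=\beta+1$, I would enumerate $\{(F_n,X_n^\prime):n\in\mathbb N\}$ and, for each $n$, define $H_n^\alpha$ following the template of Proposition \ref{prop:rayl}, but using only those attaching immersions $f:F_n\to H$ (with $H\in\mathcal F_\kappa^\beta(B(K,N))$) whose image $\mbox{im}\, f$ captures the parts of $K$ with $f[X_n^\prime]$ rigid. Universality is then checked as in Proposition \ref{prop:Kfree}: given $G\in\mathbf A_\kappa^\alpha(B(K,N))$ and finite $X\subseteq V(G)$, invoke the refined capture lemma stated above to find a finite kernel $F\supseteq X$ of $G$ that captures the parts of $K$ with $X$ rigid; pick $(F_n,X_n^\prime)$ isomorphic to $(F,X)$ via $\psi$; for each connected component $C_\lambda$ of $G-F$, apply the induction hypothesis to $\overline C_\lambda=C_\lambda\cup F$ with the finite rigid set $V(F)$, obtaining a strong immersion $\Psi_\lambda$ whose restriction composed with $\psi^{-1}$ yields an allowed immersion $f_\lambda$; glue these into a global strong immersion $\Phi:G\to H_n^\alpha$ exactly as before, and imitate Claim \ref{aff:capp} to conclude that $\mbox{im}\,\Phi$ captures the parts of $K$ with $\Phi[X]$ rigid.

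The hard part will be confirming that each constructed $H_n^\alpha$ is itself $B(K,N)$-free, since $\kappa$ copies of each allowed pattern are attached and could a priori form a bouquet across them. The plan is as follows: if some $\iota:B(K,N)\hookrightarrow H_n^\alpha$ determined a separator $N^\prime$ with infinitely many $K$-components attached, then---using the finiteness of $F_n$ and the $B(K,N)$-freeness of each attached $H$---infinitely many of these witnesses would have to be distributed across distinct attached copies $H_f\times\{\lambda\}$. The rigidity of $f[X_n^\prime]$, which in the inductive application records the position of the kernel $F$ of the target graph, lets the capturing-with-rigidity property push all those witnesses back into a single copy of $H$, contradicting its $B(K,N)$-freeness. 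It is precisely this argument that motivates introducing the refined capture-with-rigidity notion before attempting the construction.
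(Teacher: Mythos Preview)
Your construction of $H_n^\alpha$ will not in general be $B(K,N)$-free, and the rigidity argument you sketch cannot repair it. Consider the simplest case $K=uv$, $N=\{u\}$, so that $B(K,N)=K_{1,\aleph_0}$ and being $B(K,N)$-free means being locally finite. Take $F_n$ the single edge $w_1w_2$ with $X_n'=\emptyset$, take $H$ the path $w_1'w_2'x$, and let $f$ send $w_i\mapsto w_i'$. Then $\mbox{im}\,f$ does capture the parts of $K$ (every edge of $H$ has a representative in $\{w_1',w_2'\}$), so $f$ is an allowed attaching immersion in your scheme; but attaching $\kappa$ copies of $H_f=\{x\}$ gives $w_2$ infinite degree. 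Your proposed fix---pushing the infinitely many petals back into a single $H$ via capturing---fails because capturing only produces \emph{one} representative, and the representatives of distinct petals will typically coincide, so you do not recover a bouquet inside $H$ and obtain no contradiction.

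What is missing is a further split of the allowed immersions. The paper separates $\mathcal I(F_n,X;K)$ into a ``dangerous'' part $\mathcal I^\ast$, consisting of those $f$ for which some $\iota:K\to H$ satisfies $\iota^{-1}[f[F_n]]=N$, and its complement $\mathcal I'$. Only the safe immersions in $\mathcal I'$ are attached in $\kappa$ copies; a single additional parameter $h\in\{\emptyset\}\cup\mathcal I^\ast$ records at most one dangerous attachment. This is precisely why the statement asserts a bound of $\kappa$ rather than the $\aleph_0$ you claim at successor stages: the $h$ parameter ranges over a set of size up to $\kappa$. For the universality step, one observes that in any $B(K,N)$-free $G$ with kernel $F$, only finitely many components $C_\lambda$ of $G-F$ can be dangerous (otherwise pigeonhole on the finitely many maps $N\to F$ yields a bouquet in $G$), so they may be merged and handled by the single $h$-slot. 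The genuine role of capturing-with-rigidity is not to prove $B(K,N)$-freeness of $H_n^\alpha$ but to certify, in the universality argument, that the induced immersions $f_\lambda=\Psi_\lambda\circ\psi^{-1}$ for the non-dangerous components actually land in $\mathcal I'$: rigidity of $\Psi_\lambda[F]$ lets you pull a hypothetical dangerous $\iota$ in $H_\lambda$ back to one in $\overline C_\lambda$.
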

	 \begin{proof}
	 	This proof follows by transfinite induction, with the cases $\alpha=1$ and $\alpha$ a limit ordinal analogous to those in Proposition \ref{prop:rayl}.
	 	
	 	For $\alpha=\beta+1$.
	 	For each $n\in\mathbb N$ and finite set of vertices $X\subseteq V(F_n)$, consider $\mathcal I(F_n,X;K)$ the family of all immersions $f:F_n\to H$, to any $H\in\mathcal F_\kappa^\alpha(B(K,N))$, whose image $\im f\subseteq H$ captures $K$ with $f[X]$ rigid.
	 	
	 	Consider the subfamily $\mathcal I^\ast(F_n,X;K)\subseteq\mathcal I(F_n,X;K)$ the strong immersions $f:F_n\to H$ such that there is an immersion $\iota:K\to H$ with $\iota^{-1}\left[f[F_n]\right]=N$.
	 	Denote the complementary subfamily as $\mathcal I^\prime(F_n,X;K)=\mathcal I(F_n,X;K)\setminus\mathcal I^\ast(F_n,X;K)$.
	 	
	 	For each triple $n\in\mathbb N$, $X\subseteq V(F_n)$ and $h\in\{\emptyset\}\cup\mathcal I^\ast(F_n,X;K)$ we will define the graph $H^\alpha_{n,X,h}$ similarly to $H^\alpha_n$ defined in Proposition \ref{prop:rayl}.
	 	Except for the difference that the immersions we consider range in $f\in\mathcal I^\prime(F_n,X;K)$ and, if $h\not=\emptyset$, consider also a single copy of $H_h$, instead of $\kappa$ many.
	 	
	 	Notice that $F_n$ is a minimal kernel of $H^\alpha_{n,X,h}$ that verify that it is a $\kappa$-rayless graph of rank up to $\beta$.
	 	Suppose towards a contradiction that there is an immersion $\eta:B(K,N)\to H^\alpha_{n,X,h}$.
	 	Denote by $\partial N^\prime$ the subset of $N^\prime$ of all neighbors of any vertex in $B(K,N)-N^\prime$. 
	 	By definition, there is a connected component $C\subseteq H^\alpha_{n,X,h}$ such that $\eta[\partial N^\prime]\subseteq C\cup F_n$.
	 	If it isn't contained in $F_n$, then all but finitely many connected components of $B(K,N)-N^\prime$ are contained also in $C\cup F_n$, which would imply that a $H\in\mathcal F_\kappa^\beta(B(K,N))$ is not $B(K,N)$-free.
	 	
	 	Thus, we may suppose that $\eta[N^\prime]\subseteq F_n$, what in turn implies that there is a copy of $B(K,N)$ in $H_h\cup F_n$, which is another contradiction.
	 	We conclude that in fact $H^\alpha_{n,X,h}$ is $B(K,N)$-free. 
	 	
	 	Let $G\in \mathbf A_\kappa^\alpha(B(K,N))$ and $X\subseteq V(G)$ finite.
	 	Consider $F\subseteq G$ a kernel of $G$ that contains $X$ and captures the parts of $K$ on $G$ with $X$ rigid.
	 	There is $n\in\mathbb N$ such that there is an isomorphism $\psi:F\to F_n$.
	 	There are only finitely many connected components $C$ of $G-F$ such that there is an immersion $\iota:K\to C\cup F$ with $\iota^{-1}[F]=N$.
	 	Consider $C^\prime$ their union, or $C^\prime=\emptyset$ if there is no such connected component.
	 	Consider also $\overline C^\prime=C^\prime\cup F$, the enumeration of all other connected components $\{C_\lambda:\lambda<\mu\}$ and for each $\lambda<\mu$ consider $\overline C_\lambda=C_\lambda\cup F$.
	 	
	 	For each $\lambda<\mu$, consider $H_\lambda\in\mathcal F^\beta_\kappa(B(K,N))$ together with a strong immersion $\Phi_\lambda:\overline C_\lambda\to H_\lambda$ whose image captures parts of $K$ with $\Phi_\lambda[F]$ rigid.
	 	Notice that for each $\lambda<\mu$ the image of the strong immersion $\Phi_\lambda\circ\psi^{-1}:F_n\to H_\lambda$ captures parts of $K$ with $\psi[X]$ rigid.
	 	Moreover, this strong immersion is in $\mathcal I^\prime(F_n,\psi[X];K)$, since there is no immersion $\iota:K\to \overline C_\lambda$ with $\iota^{-1}[F]=N$ and $\im\Phi_\lambda$ captures parts of $K$ on $H_\lambda$ with $\Phi_\lambda[F]$ rigid. 
	 	
	 	If $C^\prime=\emptyset$, then take $h=\emptyset$.
	 	Otherwise, consider $H^\prime\in\mathcal F^\beta_\kappa(B(K,N))$ together with a strong immersion $\Phi^\prime:\overline C^\prime\to H^\prime$ whose image captures parts of $K$ on $H^\prime$ with $\Phi^\prime[F]$ rigid.
	 	Take $h=\Phi^\prime\circ\psi^{-1}:F_n\to H^\prime$, which is in $\mathcal I^\ast(F_n,\psi[X];K)$.
	 	
	 	From the $\Phi_\lambda$ and $\Phi^\prime$ defined, there is a strong immersion $\Phi:G\to H^{\alpha}_{n,\psi[X],h}$ whose image captures parts of $K$ on $H^\alpha_{n,\psi[X],h}$ with $\psi[X]=\Phi[X]$ rigid, in an analogous manner to that of Proposition \ref{prop:Kfree}.
	 	This concludes the proof of the proposition.
	 \end{proof}
	 
	 The proposition above results in the following result.
	 
	 \begin{teo}
	 	Let $\kappa$ be an infinite cardinal, let $K$ be a finite graph with $N\subseteq V(K)$ such that $K-N$ is connected.
	 	Then $\Stc(\mathbf A_\kappa(B(K,N)))=\kappa^+$, if $B(K,N)$ is not a subdivision of a star.
	 \end{teo}

\section{Other Rayless Classes}\label{sec:Other}

    This section establishes for three cases of forbidding infinitely many finite subgraphs that the results of small complexity still holds up.
    When forbidding the family of all cycles, here denoted by $\mathcal C$, the resulting class is exactly the trees.
    When forbidding the family of all odd cycles, here denoted by $\mathcal C^o$, the resulting class is exactly the bipartite graphs.
    When forbidding the family of all even cycles, here denoted by $\mathcal C^e$, the resulting class is not so straightforward, although it has the property that no cycle admits an open ear.
    
    \begin{defn}
    	Given a graph $G$ and a subgraph $H\subseteq G$, an open ear of $H$ in $G$ is a path $P\subseteq G$ such that exactly only its end vertices are in $H$.
    	An ear is either an open ear, or a cycle in $G$ that meets $H$ in exactly one vertex.
    \end{defn}

    The proof of the two first classes follows the same way laid out in Section \ref{sec:ray}, except that when taking the universal family of rank 0, consider only those with the corresponding class forbidden.
    The third case is closer to that of Section \ref{sec:Fin}, which is to say, add restrictions to the immersions considered during the construction, together with an added parameter to the construction, instead of only the natural number $n$. 
    
    \begin{teo}
        For every cardinal $\kappa$ and non-zero ordinal number $\alpha<\kappa^+$,
        \[\Stc(\mathbf A_\kappa^\alpha(\mathcal C))=\begin{cases}
            \aleph_0\text{, if }\alpha\mbox{ is a successor ordinal;}\\
            \mbox{cf}(\alpha)\text{, if }\alpha\mbox{ is a limit ordinal.}
        \end{cases}.\]
        This concludes that $\Stc(\mathbf A_\kappa(\mathcal C))=\kappa^+$.
    \end{teo}
    \begin{proof}
        The proof of this fact is similar to that of Proposition \ref{prop:rayl}, except that at the first step $\alpha=1$ it is only considered the finite trees.
        The rest of the construction on top of it cannot construct cycles and the proof of strong universality also follows as there. 
    \end{proof}

    \begin{teo}
        For every cardinal $\kappa$ and non-zero ordinal number $\alpha<\kappa^+$,
        \[\Stc(\mathbf A_\kappa^\alpha(\mathcal C^o))=\begin{cases}
            \aleph_0\text{, if }\alpha\mbox{ is a successor ordinal;}\\
            \mbox{cf}(\alpha)\text{, if }\alpha\mbox{ is a limit ordinal.}
        \end{cases}.\]
        This concludes that $\Stc(\mathbf A_\kappa(\mathcal C^o))=\kappa^+$.
    \end{teo}
    \begin{proof}
        Follow the steps of the proof of Proposition \ref{prop:rayl}, except that for the fact that at the first step $\alpha=1$ it is only considered the finite bipartite graphs.
        That the graph $H_n^\alpha$ is bipartite follows from the fact that immersions of connected graphs preserve the bipartition, therefore the kernel set $F_n$ sets each side of each connected component to one of its two sides, completing a bipartition.
    \end{proof}

    \begin{teo}\label{Thm:even}
        For every cardinal $\kappa$ and every non-zero ordinal number $\alpha<\kappa^+$ there is a family $\mathcal F_\kappa^{\alpha}(\mathcal C^e)\subseteq\mathbf A_\kappa^\alpha(\mathcal C^e)$ such that 
        \begin{enumerate}
        \item For every $G\in\mathbf A_\kappa^\alpha(\mathcal C^e)$ and finite subgraph $K\subseteq G$ there is an $H\in\mathcal F_\kappa^\alpha(\mathcal C^e)$ and a strong immersion $\Phi:G\to H$ such that if there is no open ear of $K$ in $G$, then there is no open ear of $\Phi[K]$ in $H$;
        \item The cardinality  is $|\mathcal F_\kappa^\alpha(\mathcal C^e)|\leq\kappa$.
        \end{enumerate}
        This concludes that $\Stc(\mathbf A_\kappa(\mathcal C^e))=\kappa^+$.
    \end{teo}
    \begin{proof}
        For $\alpha=1$, take the family of every connected finite graph without even cycles $\mathcal F^1_\kappa(\mathcal C^e)$ and an enumeration $\{F_n:n\in\mathbb N\}$.
        For a limit ordinal $\alpha$, it is enough to consider $\mathcal F^\alpha_\kappa(\mathcal C^e)=\bigcup_{\gamma<\alpha}\mathcal F^\alpha_\kappa(\mathcal C^e)$.

        For $\alpha=\beta+1$, for every $n\in\mathbb N$ and $H\in\mathcal F^\beta_\kappa(\mathcal C^e)$, consider the following families of strong immersions
        \[\mathcal I^e(F_n,H)=\{h\in\mathcal I(F_n, H):\mbox{im} h\subseteq H\mbox{ has open ears}.\}\]
        and take
        \[\mathcal I^\ast(F_n,H)=\mathcal I(F_n,H)\setminus\mathcal I^e(F_n,H).\]

		Let $n\in\mathbb N$ be natural number and $h\in\{\emptyset\}\cup\bigcup\{\mathcal I^e(F_n,H):H\in\mathcal F_\kappa^\beta(\mathcal C^e)\}$.
		As in Proposition \ref{prop:piso}, define a graph $H^\alpha_{n,h}$, except that not all immersions $f$ are considered, only the ones in $\bigcup\{\mathcal I^\ast(F_n,H):H\in\mathcal F_\kappa^\beta(\mathcal C^e)\}$.
		Moreover, also consider exactly one copy of $H_h$, unless $h=\emptyset$, in which case, only the former are considered.

        %Define, for each pair $n\in\mathbb N$ and $h\in\{\emptyset\}\cup\bigcup_{H\in\mathcal F^\beta_\kappa(\mathcal C^e)}\mathcal I^e(F_n,H)$ the graph $H^\alpha_{n,h}$ as the one in Proposition \ref{prop:piso}, except that it is considered only the immersions of type $f\in\bigcup_{H\in\mathcal F^\beta_\kappa(\mathcal C^e)}\mathcal I^\ast(F_n,H)$, and specifically $h$, however only one copy of $H_h$ is considered, unless $h=\emptyset$, in this case it is not considered

        The above defined graph is a $\kappa$-rayless graph of rank at most $\beta$.
        It follows from the construction, by avoiding open ears, that every cycle of $H^\alpha_{n,h}$ is contained in a subgraph of type $F_n\cup H_{f}\times\{\lambda\}$, which is isomorphic to $H\in\mathcal F^\beta_\kappa(\mathcal C^e)$.
        This concludes that there is no even cycle.
        The family $\mathcal F^\alpha_\kappa(\mathcal C^e)\subseteq \mathbf A^\alpha_\kappa(\mathcal C^e)$ is in fact strongly universal.

        For that end, take any graph $G\in\mathbf A^\alpha_\kappa(\mathcal C^e)$.
        Take also any finite subgraph $K\subseteq G$ that has no open ears.
        Consider a connected kernel set $F\subseteq G$ that contains $K$ and $n\in\mathbb N$ with an isomorphism $\psi:F_n\to F$.
        Since there are no even cycles, there are only finitely many connected components $C\subseteq G-F$ such that there is an open ear of $F$ on $C$.
        Consider the graph $C^\prime\subseteq G$ given by the union of all such connected components and $\overline C^\prime=C^\prime\cup F\subseteq G$, if there are any.
        Enumerate all the other connected components of $G-F$ by $\{C_\lambda:\lambda<\mu\}$ and define $\overline C_\lambda=C_\lambda\cup F\subseteq G$

        Consider an element $H^\prime\in\mathcal F^\beta_\kappa(\mathcal C^e)$ with a strong immersion $\Psi^\prime:\overline C^\prime\to H^\prime$ and for each $\lambda<\mu$, take an element $H_\lambda$ with a strong immersion $\Psi_\lambda:\overline C_\lambda\to H_\lambda$ such that $\Psi_\lambda[F]$ has no open ears on $H_\lambda$.
        All immersions considered above have the property that their image of $K$ has no open ears.
        Taking $h=\psi\circ\Psi^\prime:F_n\to H^\prime$ if $C^\prime$ is not empty and $h=\emptyset$ if it is, we can define similarly to Proposition \ref{prop:rayl} a strong immersion $\Phi:G\to H^\alpha_{n,h}$.
        Notice that $\Phi[K]$ has no open ears on $H^\alpha_{n,h}$, because an open ear of it would be an open ear of $\Phi[F]$, therefore it would be an open ear of $\Phi[K]$ in $F_n\cup H_h$, that contradicts the choice of $\Phi^\prime $ as $\Psi^\prime[K]$ having no open ears on $H^\prime$.
    \end{proof}

    One last example of class of rayless with still small complexity is the class of graphs without infinite trails.
    A trail, much like a path, is a sequence of connected edges on the graph.
    They may repeat a vertex, but never an edge.
    An infinite trail is the analogous to a ray, by having a starting point an going to the infinite.
    Every ray is an infinite trail, therefore the class of graph without infinite trails is a subclass to that of the rayless class.

    \begin{defn}
        The class of $\kappa$-graphs without trails is denoted as $\mathbf B_\kappa$, the subclass of rayless graphs of rank less than an ordinal $\alpha$ is $\mathbf B_\kappa^\alpha$.
    \end{defn}
    
    In fact, it is possible to characterize when a rayless graph has no infinite trails.
    
    \begin{lem}\label{lem:trails}
        A rayless graphs has no infinite trails if, and only if, there is no vertex $v$ with an infinite family of pairwise edge-disjoint cycles, each containing the vertex.
        In this case, the graph is finitely separated, that is, for every two vertices, there is a finite cut that separates them.
    \end{lem}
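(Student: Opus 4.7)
The plan is to establish the stated equivalence first and then derive the finite separation property from it. The reverse direction of the equivalence is immediate: given pairwise edge-disjoint cycles $C_1, C_2, \ldots$ through a fixed vertex $v$, traversing each $C_k$ in turn starting and ending at $v$ produces an infinite trail in $G$.

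For the forward direction, I would start from an infinite trail $T$ in $G$ and consider the subgraph $H \subseteq G$ spanned by the edges of $T$. If no vertex of $T$ were visited infinitely often, then every vertex of $H$ would have finite degree; since $H$ is connected and has infinitely many vertices, a standard application of K\"onig's infinity lemma produces a ray in $H \subseteq G$, contradicting the hypothesis that $G$ is rayless. So some vertex $v$ is visited by $T$ infinitely often, and splitting $T$ at its successive visits to $v$ yields infinitely many pairwise edge-disjoint closed trails based at $v$. A short induction on length then shows that every closed trail based at $v$ contains a cycle through $v$: if some internal vertex $w \neq v$ reappears, splice out the closed subtrail between two of its occurrences and apply induction to the shorter closed trail at $v$ that remains; otherwise the trail is already a cycle through $v$. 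Extracting one cycle from each segment of $T$ gives the required infinite family of edge-disjoint cycles through $v$.

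For the finite separation assertion, assume $G$ has no infinite trail and suppose towards a contradiction that some pair $u, v$ cannot be separated by any finite edge cut. Then I would construct pairwise edge-disjoint $u$-$v$-paths $P_1, P_2, \ldots$ recursively: having chosen $P_1, \ldots, P_n$, the finite edge set $E(P_1) \cup \cdots \cup E(P_n)$ is not a cut, so $u$ still reaches $v$ after these edges are removed, and any such path serves as $P_{n+1}$. Pair up consecutive paths: each $P_{2k-1} \cup P_{2k}$ is a closed trail at $u$, and hence, by the closed-trail-contains-cycle claim above, contains a cycle through $u$. These cycles are pairwise edge-disjoint because the underlying pairs of paths are, and this contradicts the no-infinite-trails hypothesis via the equivalence just established.

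The main obstacle I anticipate is formalizing cleanly the two small ``folklore'' steps that the argument rests on: the extraction of a ray from a locally finite connected infinite subgraph, and the claim that every closed trail based at $v$ contains a cycle through $v$. The latter in particular requires the induction to be set up so that the spliced trail remains a closed trail at $v$ with strictly fewer edges; the right choice is to splice at the first internal vertex that reappears, which keeps the edge sets disjoint by the trail property of $T$.
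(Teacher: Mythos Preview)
The paper states this lemma without proof, so there is no argument to compare your proposal against. Your proof is correct and supplies exactly the kind of standard verification the paper omits: the reverse implication is immediate, the forward implication goes through K\"onig's lemma to locate a vertex visited infinitely often and then through the splicing induction to extract a cycle through $v$ from each closed subtrail, and the finite-separation clause follows by greedily building edge-disjoint $u$--$v$ paths and pairing them into closed trails at $u$. The two auxiliary facts you flag as potential obstacles are indeed routine; in particular your splicing induction is fine because in a simple graph the spliced closed trail at $v$ always retains at least three edges, so the base case is genuinely a cycle through $v$.
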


    First remark that in trees, an infinite trail is the same as a ray, concluding that their complexity cannot be less than $\kappa^+$.

    This characterization allows us to construct strongly universal families for graphs without trails of rayless rank up to a certain $\alpha$ similar to the one constructed in Theorem \ref{Thm:even}, and limiting the amount of closed ears of the kernel set.
    
    \begin{teo}\label{Thm:trail}
        For every cardinal $\kappa$ and non-zero ordinal number $\alpha<\kappa^+$ there is a family $\mathcal F_\kappa^{\alpha}\subseteq\mathbf B_\kappa^\alpha$ such that 
        \begin{enumerate}
        \item For every $G\in\mathbf B_\kappa^\alpha$ and finite subgraph $K\subseteq G$ there is $H\in\mathcal F_\kappa^\alpha$ and a strong immersion $\Phi:G\to H$ such that if there is no ear of $K$ in $G$, then there is no ear of $\Phi[K]$ in $H$;
        \item The cardinality  is $|\mathcal F_\kappa^\alpha|\leq\kappa$.
        \end{enumerate}
        This concludes that $\Stc(\mathbf B_\kappa)=\kappa^+$.
    \end{teo}
   \begin{proof}
        For $\alpha=1$, take the family of every connected finite graph $\mathcal F_\kappa^1$ and an enumeration $\{F_n:n\in\mathbb N\}$.
        For $\alpha$ a limit ordinal, it is enough to consider $\mathcal F_\kappa^\alpha=\bigcup_{\gamma<\alpha}\mathcal F_\kappa^\gamma$.

        For $\alpha=\beta+1$, for every $n\in\mathbb N$ and $H\in\mathcal F_\kappa^\beta$, consider the following families of strong immersions
        \[\mathcal I^o(F_n,H)=\{h\in\mathcal I(F_n,H):\mbox{im}h\subseteq H\mbox{ has ears.}\}\]
        and take
        \[\mathcal I^\ast(F_n, H)=\mathcal I(F_n,H)\setminus\mathcal I^o(F_n, H).\]

        Define for each pair $n\in\mathbb N$ and $h\in\{\emptyset\}\cup\bigcup_{H\in\mathcal F_\kappa^\beta}\mathcal I^0(F_n,H)$ the graph $H^\alpha_{n,h}$ as the one on Theorem \ref{Thm:even}.
        Towards a contradiction, suppose that there is a trail on a $H^\alpha_{n,h}$.
        From Lemma \ref{lem:trails}, there is a fixed vertex $v$ such that there is a pairwise edge-disjoint family of cycles $\{C_n:n\in\mathbb N\}$ with $v$ in each $C_n$.
        There are two possibilities for the position of $v$, the first one being $v\in V(F_n\cup H_h)$, but then every cycle is contained in $F_n\cup H_h$, which has no infinite trails, contradicting Lemma \ref{lem:trails}.
        The other possibility is $v\in H_f\times\{\lambda\}$ for some pair of $f\in\mathcal I^\ast(F_n,H)$ and $\lambda<\kappa$. 
        This case results in every cycle being contained in $H_f\times\{\lambda\}$, also contradicting Lemma \ref{lem:trails}, since it has no infinite trails.

        Take $G\in\mathbf B_\kappa^\alpha$ and a finite subgraph $K\subseteq G$.
        Let $F\subseteq G$ be a kernel set that contains $K$.
        If there are ears of $F$ on $G$, then they are contained in finitely many connected components of $G-F$, otherwise there would be an infinite internally disjoint family of ears with the same ends, which contains an infinite trail.
        Consider the subgraph $C^\prime\subseteq G-F$ the union of such components.
        Similarly to the construction in Theorem \ref{Thm:even}, we may define $n\in\mathbb N$ with $F$ isomorphic to $F_n$, $h$ and a strong immersion $\Phi:G\to H^\alpha_{n,h}$ that satisfies the property (1).
    \end{proof}

    \section*{Acknowledgements}
    The first named author thanks the support of Fundação de Amparo à Pesquisa do Estado de São Paulo (FAPESP), being sponsored through grant number 2025/12199-3. 
    The second named author acknowledges the support of Conselho Nacional de Desenvolvimento Científico e Tecnológico (CNPq) through grant number 141373/2025-3.
    This study was financed in part by the Coordenação de Aperfeiçoamento de Pessoal de Nível Superior – Brasil (CAPES) – Finance Code 001.

\end{document}